\def\Z{{\mathbb Z}}
\def\GL{{\rm GL}}
\def\Jac{{\rm Jac}}
\def\Cl{{\rm Cl}}
\def\OO{{\mathcal O}}
\def\OO{{\mathcal O}}
\def\P{{\mathbb P}}
\def\Disc{{\rm Disc}}
\def\rank{{\rm rank}}
\def\Sel{{\rm Sel}}
\def\Vol{{\rm Vol}}
\def\R{{\mathbb R}}
\def\F{{\mathbb F}}
\def\Q{{\mathbb Q}}
\def\Z{{\mathbb Z}}
\def\P{{\mathbb P}}
\def\F{{\mathbb F}}
\def\Q{{\mathbb Q}}
\def\C{{\mathbb C}}
\def\wzn2{{W_{\Z,+}^{(2-)}}}
\def\fz1{{F_{\Z,1}}}
\def\pic{{\rm Pic}}
\def\div{{\rm div}}
\def\res{{\rm Res}}
\def\rk{{\rm rk}}
\newtheorem{theorem}{Theorem}[section]
\newtheorem{lemma}[theorem]{Lemma}
\newtheorem{remark}[theorem]{Remark}
\newenvironment{proof}{\noindent {\bf Proof:}}{$\Box$ \vspace{2 ex}}
\begin{document}

\title{Bounds on $2$-torsion in class groups of number fields \\ and integral points on elliptic curves} 

\author{M.\ Bhargava, A.\  Shankar, T.\ Taniguchi, F.\ Thorne, J.\ Tsimerman, and Y.\ Zhao}

\maketitle
\begin{abstract}
  We prove the first known nontrivial bounds on the sizes of the 2-torsion
  subgroups of the class groups of cubic~and~higher degree number
  fields $K$ (the trivial bound being $O_{\epsilon}(|\Disc(K)|^{1/2+\epsilon})$ by
  Brauer--Siegel).  This~yields corresponding improvements to: 1) bounds of Brumer and Kramer on
  the sizes of 2-Selmer groups and ranks of elliptic curves; 2) bounds of Helfgott and Venkatesh on
  the number of integral points on elliptic curves; 3) bounds on the sizes of 2-Selmer groups and ranks of Jacobians of hyperelliptic curves; and 4) bounds of Baily and
  Wong on the number of $A_4$-quartic fields of bounded
  discriminant.
\end{abstract}

\setcounter{tocdepth}{4}


\section{Introduction}

For any number field $K$ of a given degree $n$, and any positive integer $m$, it is
conjectured that the size $h_m(K)$ of the $m$-torsion subgroup of the
class group of $K$ satisfies $h_m(K)=O_\epsilon(|\Disc(K)|^\epsilon)$.  It~clearly suffices to
prove the conjecture in the case that $m=p$ is a prime.  This is
currently known only for $n=2$ and $p=2$ via Gauss's genus theory.
For other degrees and primes $p$, the best that is known in general is
that $h_p(K)=O_\epsilon(|\Disc(K)|^{1/2+\epsilon})$, by Brauer--Siegel.  It has
thus become a problem of interest to break the trivial bound for any
$n$ and $p$ for $(n,p)\neq (2,2)$. 

The first breakthrough on this problem was obtained in the case of 
$(n,p)=(2,3)$, achieved by Pierce~\cite{Pierce} and Helfgott and
Venkatesh~\cite{HV}, who proved bounds of $O_\epsilon(|\Disc(K)|^{27/56+\epsilon})$ and $O(|\Disc(K)|^{0.44178\ldots})$, respectively. 
Ellenberg and Venkatesh~\cite{EV} improved these bounds to \linebreak $O_\epsilon(|\Disc(K)|^{1/3+\epsilon})$, and also achieved power savings
over the trivial bounds for $(n,p)=(3,3)$ (again~proving $O_\epsilon(|\Disc(K)|^{1/3+\epsilon})$) and also for $(n,p)=(4,3)$.  These are the only three cases of $(n,p)$ previously known.

The primary purpose of this paper is to 
develop a uniform method that
yields nontrivial bounds for $(n,2)$ for {\it all} degrees $n>2$, i.e., nontrivial
bounds on the $2$-torsion in class groups of number fields of general
degree $n$. More precisely, we prove the following theorem. 

\begin{theorem}\label{main}
  The size $h_2(K)$ of the $2$-torsion subgroup of the class group of
  a number field $K$ of degree $n$ is $O_{\epsilon}(|\Disc(K)|^{1/2-\delta_n+\epsilon})$ for
  some $\delta_n>0$.
\end{theorem}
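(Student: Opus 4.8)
The plan is to bound $h_2(K)$ by counting the number of certain integral binary forms or lattice points that naturally parametrize $2$-torsion ideal classes, rather than counting the ideal classes directly. The key classical observation is that an element of order dividing $2$ in the class group of $K$ corresponds (after choosing a representative ideal) to an ideal $I$ with $I^2 = (\alpha)$ principal; by adjusting by principal ideals one may take $I$ to have norm $O(\sqrt{|\Disc(K)|})$ (a Minkowski-type bound), and then $\alpha$ is an algebraic integer of norm $\mathrm{N}(I)^2 = O(|\Disc(K)|)$. The first step is therefore to set up this parametrization carefully: a $2$-torsion class gives a pair $(I,\alpha)$ with $I^2=(\alpha)$, and the ambiguity in $\alpha$ is exactly multiplication by squares of units. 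The trivial bound $O_\epsilon(|\Disc(K)|^{1/2+\epsilon})$ comes from just counting the possible norms of $I$ up to the Minkowski bound; the goal is to do better by using the extra rigidity coming from $\alpha$ and the geometry of numbers.

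Next I would reduce to a geometry-of-numbers count. After fixing a fundamental domain for units acting on the embedding space $K \otimes \R$, the element $\alpha$ lives in a region of bounded shape whose volume is controlled by $|\Disc(K)|$, and it is constrained to lie in the sublattice $I^2 \subset \OO_K$. The number of $2$-torsion classes is then at most the number of lattice points $\alpha \in I^2$ of bounded norm that can arise, summed over the (few) possible ideals $I$. The crucial point is that $\alpha$ being a square of an ideal forces $\alpha$ into a thin set; more usefully, one can use the fact that $\OO_K[\sqrt{\alpha}]$ — or more precisely the order generated by a square root — has discriminant dividing a bounded multiple of $\Disc(K)$, so one is really counting points on a variety (an affine cone) inside a box, and one expects savings of the shape: the number of such points in a box of side $X$ is $O(X^{n-1})$ rather than $O(X^n)$, or better, because of the defining equations. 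Translating the norm bound on $\alpha$ into the side length and comparing exponents should yield $h_2(K) = O_\epsilon(|\Disc(K)|^{1/2 - \delta_n + \epsilon})$ with an explicit $\delta_n$ like $1/(2n)$ or similar.

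The main obstacle I anticipate is handling the unit group: the fundamental domain for $\OO_K^\times$ acting on $K \otimes \R$ is noncompact with cusps in the directions of vanishing absolute values at various archimedean places, and a priori a $2$-torsion class could be represented by an $\alpha$ pushed far out into a cusp, where the box has small volume in some coordinates and large volume in others, defeating a naive count. Dealing with this requires either a smoothing/averaging argument over the choice of representative (using that there are $2^{r_1+r_2-1}$ choices of sign/unit-square class, or averaging $\alpha$ over translates), or a careful covering of the fundamental domain by boxes and a uniform lattice-point estimate (à la Ellenberg–Venkatesh, or via Bombieri–Pila / determinant method bounds for points on curves/varieties in boxes). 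A secondary difficulty is that the implied constants must be genuinely uniform in $K$ — in particular independent of the unit rank and the regulator, which again is where Brauer–Siegel-type bounds on the regulator and class number enter to keep everything polynomial in $|\Disc(K)|$.

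\medskip

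\noindent\textbf{Remark on method.} An alternative, essentially equivalent, route is to observe that unramified degree-$2$ extensions, and more generally the fields cut out by $2$-torsion classes, are governed by rings of the form $\OO_K[x]/(x^2 - \alpha)$, so that counting $2$-torsion in $\Cl(K)$ is subsumed by counting pairs (number field $K$, quadratic algebra over $K$ of bounded relative discriminant), which one attacks by counting the corresponding binary quadratic forms over $\OO_K$ up to equivalence via the geometry of numbers over $\OO_K\otimes\R$. I expect the paper to take this or the closely related ``count integral points on a fixed affine scheme in a growing box'' approach, since that is what makes the argument uniform across all $n$.
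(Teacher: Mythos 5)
Your overall frame---represent a $2$-torsion class by an ideal $I$ with $I^2=(\alpha)$, count possible generators by geometry of numbers, and extract the extra power saving from a Bombieri--Pila-type input---is the same as the paper's, but the proposal has genuine gaps at each of the three decisive steps. First, the unit/cusp problem that you single out as the main obstacle is not something the paper smooths, averages, or covers away: it is eliminated at the outset by a better choice of representative. Given $I^m=(\alpha)$ (here $m=2$), one chooses $\kappa\in I^{-1}$ by Minkowski's theorem in the weighted box $|\kappa|_v\leq |\Disc(K)|^{1/2n}|\alpha|_v^{-1/2}$, whose volume is large enough relative to the covolume of $I^{-1}$, and replaces $(I,\alpha)$ by $(\kappa I,\,\beta=\alpha\kappa^2)$; then \emph{every} archimedean embedding of $\beta$ is at most $|\Disc(K)|^{1/n}$, so no fundamental domain for $\OO_K^\times$, no regulator bound, and no Brauer--Siegel input are needed at this stage. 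Your plan controls only the norm of $\alpha$ and then confronts the noncompact unit domain, which is precisely the difficulty the paper's Theorem on representatives is designed to bypass, and you do not supply an argument that actually overcomes it.

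Second, even once $\beta$ is confined to the box $B$ of side $\asymp|\Disc(K)|^{1/n}$, the bound ``number of lattice points $\ll \Vol(B)/\textnormal{covol}(\OO_K)=O(|\Disc(K)|^{1/2})$'' is not automatic: if $\OO_K$ were skew, with largest successive minimum much bigger than $|\Disc(K)|^{1/n}$, the count in $B$ would be governed by the short minima alone and could exceed the volume heuristic. The paper needs, and proves, the nontrivial structural fact that the largest successive minimum of $\OO_K$ is $O(|\Disc(K)|^{1/n})$, using the ring structure via products $v_iv_j$ of a Minkowski-reduced basis; your proposal simply asserts the volume-controlled count. Third, your proposed source of the final saving is not viable as stated: the condition that $N_{K/\Q}(\alpha)$ be a perfect square is an arithmetic, not an algebraic-variety, condition, so determinant-method bounds of the shape $O(X^{n-1})$ for points on an affine cone do not apply to it. The paper instead fibres $B$ into lines $\beta+\Z$, sets $f_\beta(x)=N_{K/\Q}(\beta-x)$, and applies Bombieri--Pila to the curves $y^2=g_\beta(x)$ (with $g_\beta$ the squarefree part), treating separately the degenerate cases: fields $K$ with an index-$2$ subfield via genus theory plus Brauer--Siegel for the subfield, and $\beta$ lying in a proper subfield (where $f_\beta$ is a square polynomial) via a lattice count in that subfield. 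This is what produces the explicit exponent $\delta_n=\frac{1}{2n}$; without these three ingredients your outline does not yet constitute a proof.
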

For cubic and quartic fields $K$, our methods in fact allow us to prove that
$h_2(K)=\linebreak O_\epsilon(|\Disc(K)|^{.2784\ldots+\epsilon})$, which is a significant improvement over
$O_\epsilon(|\Disc(K)|^{1/2+\epsilon})$.   For $n>4$, we prove Theorem~\ref{main} with
$\delta_n=\frac{1}{2n}$.

As a consequence of Theorem~\ref{main} for $n=3$, we obtain
corresponding improvements to the best known bounds on the 2-Selmer
groups, ranks, and the number of integral points on elliptic curves,
in terms of their discriminants.
Specifically, we prove:

\begin{theorem} \label{main2}
  Let $E$ be an elliptic curve over $\Q$ in Weierstrass form with integral coefficients.
Then:
\begin{itemize}
\item[{\rm (a)}]
$|\Sel_2(E)|=O_\epsilon(|\Disc(E)|^{.2784\ldots+\epsilon})$;
\item[{\rm (b)}]
$\rank(E)< c_\epsilon+(.2784\ldots+\epsilon)\log_2(|\Disc(E)|)$ for some constant $c_\epsilon$ depending only on $\epsilon$;
\item[{\rm (c)}]
 The number of integral points on $E$ is at most
$
O_{\epsilon}(|\Disc(E)|^{0.1117\ldots+\epsilon}).$
\end{itemize}
\end{theorem}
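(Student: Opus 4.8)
The plan is to deduce Theorem~\ref{main2} from Theorem~\ref{main} in the case $n=3$, using the classical dictionary between 2-descent on elliptic curves and arithmetic of cubic rings/fields. First I would recall the explicit construction: for an elliptic curve $E/\Q$ given by $y^2 = x^3 + Ax + B$ with $A,B\in\Z$, the 2-Selmer group $\Sel_2(E)$ sits inside a product of unit-group-type objects attached to the \'etale algebra $\Q[x]/(x^3+Ax+B)$, and an element of the 2-Selmer group can be bounded, after work of Brumer--Kramer and via the connection to binary quartic forms, in terms of the 2-torsion of the class group together with the unit rank and the number of bad primes. Concretely, one stratifies the $x$-coordinate: if $\Delta$ denotes a cubic field (or product of fields) of discriminant dividing a fixed power of $\Disc(E)$, then $|\Sel_2(E)|$ is bounded by $\sum_\Delta 2^{r_1+r_2}\,h_2(\Delta)\cdot 2^{\omega(\Disc E)}$, where the sum is over the relevant cubic algebras, $r_1,r_2$ count real and complex places, and $\omega$ is the number of distinct prime divisors. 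The cubic algebras $\Delta$ that arise have $|\Disc(\Delta)| = O_\epsilon(|\Disc(E)|^{1+\epsilon})$, so Theorem~\ref{main} with $n=3$ gives $h_2(\Delta) = O_\epsilon(|\Disc(E)|^{.2784\ldots+\epsilon})$; since $2^{\omega(\Disc E)} = O_\epsilon(|\Disc(E)|^\epsilon)$ and the unit-rank factor is $O(1)$, and the number of relevant $\Delta$ is also $O_\epsilon(|\Disc(E)|^\epsilon)$, part~(a) follows.

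For part~(b), I would use the standard inequality $\rank(E) \le \dim_{\F_2}\Sel_2(E) = \log_2 |\Sel_2(E)|$ (using that $E(\Q)[2]$ contributes and $\Sha[2]\ge 0$), so that $\rank(E) \le \log_2|\Sel_2(E)| \le c_\epsilon + (.2784\ldots+\epsilon)\log_2(|\Disc(E)|)$ directly from part~(a), absorbing the implied constant into $c_\epsilon$. For part~(c), the plan is to invoke the method of Helfgott--Venkatesh~\cite{HV}, who show that the number of integral points on $E$ is bounded by a quantity of the shape $O_\epsilon\big(|\Disc(E)|^\epsilon\big)\cdot C^{\rank(E)}$ for an explicit absolute constant $C$ (roughly, integral points are sparse in the Mordell--Weil lattice, and the count grows at most like a fixed power per unit of rank), together with a further gain coming from a covering/sphere-packing argument; feeding in the bound on $\rank(E)$ from~(b) yields an exponent that is a fixed multiple of $.2784\ldots$, and tracking the constants in~\cite{HV} gives the stated $0.1117\ldots$ (which is essentially $.2784\ldots$ times $\log_2 C$ for the relevant $C$, roughly $C^{1/4}$-type savings).

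The main obstacle will be part~(a): making precise the reduction of bounding $|\Sel_2(E)|$ to bounding $h_2$ of the associated cubic algebra, so that the exponent is exactly that of Theorem~\ref{main} with no loss. One must be careful that (i) the cubic algebra attached to $E$ may be a product $\Q\times F$ (when $E$ has a rational 2-torsion point) or a non-maximal order, so Theorem~\ref{main} must be applied with the correct discriminant and the class-group/unit contributions of all factors controlled; (ii) the passage from the Selmer group to class-group 2-torsion introduces ramified primes, contributing a factor $2^{O(\omega(\Disc E))}$, which is harmless since $\omega(N) = O(\log N/\log\log N)$ gives $|\Disc(E)|^{o(1)}$; and (iii) the number of cubic algebras $\Delta$ of discriminant dividing a bounded power of $\Disc(E)$ that actually contribute is itself $|\Disc(E)|^\epsilon$, which follows from standard divisor bounds. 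Once these bookkeeping points are handled, (b) and (c) are formal consequences of (a) and the cited results, respectively. A secondary subtlety in~(c) is confirming that the exponent $0.1117\ldots$ matches what the Helfgott--Venkatesh machinery outputs when fed the improved rank bound rather than the trivial Brumer--Kramer bound; this amounts to re-running their final optimization with the new input, which changes only the numerics and not the structure of the argument.
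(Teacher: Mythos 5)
Your proposal follows essentially the same route as the paper: parts (a) and (b) come from the Brumer--Kramer inequality relating $\Sel_2(E)$ to $\Cl(K)[2]$ for the \'etale cubic algebra $K=\Q[x]/(f(x))$ (the factor $2^{O(\omega(|\Disc(E)|))}$ being absorbed into $|\Disc(E)|^\epsilon$), combined with the cubic bound $h_2(K)=O_\epsilon(|\Disc(K)|^{.2784\ldots+\epsilon})$, and part (c) follows by substituting the rank bound of (b) into Helfgott--Venkatesh's estimate $O_\epsilon\big(|\Disc(E)|^\epsilon e^{(\beta+\epsilon)\rank(E)}\big)$, yielding the exponent $.2784\ldots\cdot\beta/\log 2=.1117\ldots$ exactly as in the paper. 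The one point to flag is that the exponent $.2784\ldots$ is not what the generic argument of Theorem~\ref{main} gives for $n=3$ (that would be $1/2-\frac{1}{2n}=1/3$); it is the refined cubic bound the paper proves separately via the feedback loop between class groups and integral points, so your appeal should be to that refinement rather than to the bare statement of Theorem~\ref{main}.
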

This significantly improves on the previously best known bounds of
$O_\epsilon(|\Disc(E)|^{1/2+\epsilon})$, $c_\epsilon+(1/2+\epsilon)\log_2(|\Disc(E)|)$, and
$O_\epsilon(|\Disc(E)|^{.2007\ldots+\epsilon})$, respectively, as obtained in the works of Brumer and
Kramer~\cite{BK} and Helfgott and Venkatesh~\cite{HV}.  Our methods 
follow theirs, but we employ our new bounds on the 2-torsion in the class
groups of cubic fields to obtain the improvements.


We obtain analogous improvements on the best known bounds on the sizes of 2-Selmer groups and ranks of Jacobians of general hyperelliptic curves over $\Q$ of given genus.  
 It is known (generalizing the aforementioned work of Brumer and Kramer) that if $C:y^2=f(x)$ is a hyperelliptic curve over $\Q$, where $f$ is separable of degree~$n$ over~$\Q$, and $K$ is the \'etale $\Q$-algebra $\Q[x]/(f(x))$, then $|\Sel_2(\Jac(C))|=O_\epsilon(|\Disc(K)|^\epsilon h_2(K))$, and thus $|\Sel_2(\Jac(C))|=O_\epsilon(|\Disc(K)|^{1/2+\epsilon})$ by Brauer--Siegel.  
 This yields the best known bounds unconditionally on the 2-Selmer groups and ranks of Jacobians of hyperelliptic curves over $\Q$ to date.  Theorem~\ref{main} breaks these ``trivial'' bounds:

 \begin{theorem} \label{main2.5}
  Let $C:y^2=f(x)$ be a hyperelliptic curve over $\Q$, where $f(x)\in\Z[x]$ is a separable polynomial of degree $n$, and let $K=\Q[x]/(f(x))$ Then:
\begin{itemize}
\item[{\rm (a)}]
$|\Sel_2(\Jac(C))|=O_\epsilon(|\Disc(K)|^{1/2-\delta_n+\epsilon});$
 \item[{\rm (b)}]
$\rank(\Jac(C))<c_\epsilon+(1/2-\delta_n+\epsilon)\log_2|\Disc(K)|$ for some constant $c_\epsilon$ depending only on $\epsilon$.
\end{itemize}
\end{theorem}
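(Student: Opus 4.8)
The plan is to reduce both parts of Theorem~\ref{main2.5} to Theorem~\ref{main} via the relation $|\Sel_2(\Jac(C))|=O_\epsilon(|\Disc(K)|^\epsilon h_2(K))$ recalled above. Since $K=\Q[x]/(f(x))$ is in general only an \'etale $\Q$-algebra rather than a number field, the one new ingredient is an extension of Theorem~\ref{main} to such algebras, which I would obtain by multiplicativity.

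First I would factor $f=c\,f_1\cdots f_r$ with $c\in\Z$ and the $f_i\in\Z[x]$ irreducible and pairwise non-associate (possible since $f$ is separable), so that $K\cong K_1\times\cdots\times K_r$ with $K_i=\Q[x]/(f_i(x))$ a number field of degree $n_i$ and $\sum_i n_i=n$. The class group of the maximal order $\OO_K=\prod_i\OO_{K_i}$ is $\prod_i\Cl(K_i)$, whence $h_2(K)=\prod_i h_2(K_i)$; and since the trace form on $\OO_K$ is block diagonal, $|\Disc(K)|=\prod_i|\Disc(K_i)|$ (passing from $|\disc(f)|$ to $|\disc(K)|$ only discards square and resultant factors, which can only help). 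For each $i$ I would then bound $h_2(K_i)$: trivially $h_2(K_i)=1$ if $n_i=1$; by Gauss's genus theory $h_2(K_i)=O_\epsilon(|\Disc(K_i)|^\epsilon)$ if $n_i=2$; and by Theorem~\ref{main}, $h_2(K_i)=O_\epsilon(|\Disc(K_i)|^{1/2-\delta_{n_i}+\epsilon})$ if $n_i\geq 3$. Since $\delta_m=1/(2m)$ is non-increasing for $m\geq 5$ while $\delta_3=\delta_4=.2216\ldots$, we have $\delta_{n_i}\geq\delta_n$ whenever $3\leq n_i\leq n$; combined with $|\Disc(K_i)|\geq 1$ and $1/2-\delta_n>0$, this yields $h_2(K_i)=O_\epsilon(|\Disc(K_i)|^{1/2-\delta_n+\epsilon})$ for every $i$, and multiplying over $i$,
\[
h_2(K)=\prod_i h_2(K_i)=O_\epsilon\Bigl(\prod_i|\Disc(K_i)|^{1/2-\delta_n+\epsilon}\Bigr)=O_\epsilon\bigl(|\Disc(K)|^{1/2-\delta_n+\epsilon}\bigr),
\]
with $\delta_n$ exactly the quantity appearing in Theorem~\ref{main}.

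Part~(a) now follows immediately on inserting this bound into $|\Sel_2(\Jac(C))|=O_\epsilon(|\Disc(K)|^\epsilon h_2(K))$ and absorbing the extra factor $|\Disc(K)|^\epsilon$. For part~(b), the exact sequence $0\to\Jac(C)(\Q)/2\Jac(C)(\Q)\to\Sel_2(\Jac(C))\to\SH(\Jac(C))[2]\to 0$ gives $\rank(\Jac(C))\leq\dim_{\F_2}\Sel_2(\Jac(C))=\log_2|\Sel_2(\Jac(C))|$, and taking $\log_2$ of the bound in~(a) produces $\rank(\Jac(C))<c_\epsilon+(1/2-\delta_n+\epsilon)\log_2|\Disc(K)|$ with $c_\epsilon$ the implied constant. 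The only delicate point is the bookkeeping in the reduction: one must apply Theorem~\ref{main} to each $K_i$ with its own field discriminant (and not $\disc(f_i)$ or $\disc(f)$), and must handle the degree-$1$ and degree-$2$ factors --- for which Theorem~\ref{main} asserts no power saving --- through the trivial and genus-theory bounds instead, so that the product over $i$ still beats $|\Disc(K)|^{1/2}$. Beyond this, all of the genuine difficulty is already contained in Theorem~\ref{main}.
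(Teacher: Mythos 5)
Your proof is correct and follows the same route the paper intends: the paper treats Theorem~\ref{main2.5} as an immediate consequence of the stated relation $|\Sel_2(\Jac(C))|=O_\epsilon(|\Disc(K)|^\epsilon h_2(K))$ together with Theorem~\ref{main}, and supplies no further argument. Your additional bookkeeping --- factoring the \'etale algebra $K$ into number fields, using multiplicativity of $h_2$ and of the discriminant, and handling the degree-$1$ and degree-$2$ factors via the trivial and genus-theory bounds while checking $\delta_{n_i}\geq\delta_n$ --- correctly fills in the reduction the paper leaves implicit.
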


  Since $|\Disc(K)|< |\Disc(C)|$, where $\Disc(C)$ is the discriminant of the curve $C$, we may also replace $|\Disc(K)|$ by $|\Disc(C)|$ in each of the bounds in Theorem~\ref{main2.5}.  We suspect that the analogues of Theorem~\ref{main2}(c) for hyperelliptic curves could also be developed using Theorem~\ref{main}, though we do not work this out here (but see forthcoming work of Alpoge).

As a further consequence of Theorem~\ref{main} for $n=3$, we obtain
the best known bounds on the number of $A_4$-quartic fields of bounded
discriminant:

\begin{theorem}\label{main3}
Let $N_4(A_4,X)$ denote the number of isomorphism classes of quartic fields having associated Galois
group $A_4$ and discriminant less than $X$.  Then $N_4(A_4,X)=O_\epsilon(X^{.7784\ldots+\epsilon})$.
\end{theorem}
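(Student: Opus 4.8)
The plan is to count $A_4$-quartic fields $L$ of discriminant less than $X$ by exploiting the classical structure of the $A_4$-tower. Every $A_4$-quartic field $L$ sits inside its Galois closure $\widetilde L$, with $\Gal(\widetilde L/\Q)\cong A_4$; the unique index-$4$ normal subgroup $V_4\trianglelefteq A_4$ fixes a cubic field $K$, which is the cubic resolvent of $L$, and $\Gal(\widetilde L/K)\cong V_4$. Thus $\widetilde L/K$ is an (everywhere, up to ramification) unramified-at-finite-primes-controlled $V_4$-extension, and more precisely $L$ corresponds to a certain index-$2$ subgroup of a quotient of the $2$-torsion of a ray class group of $K$ (with conductor supported only at primes dividing a bounded power of $\Disc(K)$). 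So the first step is to set up this parametrization: fix a cubic field $K$, and bound the number of $A_4$-quartic fields $L$ whose cubic resolvent is $K$ in terms of $h_2(\OO_K)$ together with a contribution from the ramified part of the conductor.

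The second step is to make the conductor contribution negligible. The relative discriminant $\Disc(\widetilde L/K)$ is divisible only by primes dividing $\Disc(K)$, and standard conductor-discriminant manipulations give $\Disc(L)$ in terms of $\Disc(K)$ and the conductor of the relevant quadratic character of $K$; since $|\Disc(L)|<X$ forces $|\Disc(K)|$ to be at most a bounded power of $X$ and the number of possible conductors dividing a fixed modulus is $O_\epsilon(|\Disc(K)|^\epsilon)=O_\epsilon(X^\epsilon)$, the count of $L$ with resolvent $K$ is $O_\epsilon(X^\epsilon\, h_2(\OO_K))$. Here we use Theorem~\ref{main} for $n=3$: $h_2(\OO_K)=O_\epsilon(|\Disc(K)|^{.2784\ldots+\epsilon})$.

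The third step is to sum over the cubic resolvents $K$. For an $A_4$-quartic field of discriminant bounded by $X$, the resolvent cubic $K$ is a (necessarily non-cyclic, in fact $S_3$- or $C_3$- but here $C_3$ since the $A_4$-closure contains no transposition, hence) totally real or complex cubic field whose discriminant is itself bounded: one has $\Disc(L) = \Disc(K)\cdot N_{K/\Q}(\mathfrak f)^2$ for the conductor $\mathfrak f$, so in particular $|\Disc(K)|\le X$. By the Davenport--Heilbronn theorem (or just the classical bound of Davenport--Heilbronn / its elementary upper-bound version), the number of cubic fields $K$ with $|\Disc(K)|\le Y$ is $O(Y)$. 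Therefore
\[
N_4(A_4,X)\;=\;\sum_{|\Disc(K)|\le X} O_\epsilon\!\bigl(X^\epsilon\, h_2(\OO_K)\bigr)\;=\;O_\epsilon\!\Bigl(X^\epsilon\sum_{|\Disc(K)|\le X}|\Disc(K)|^{.2784\ldots+\epsilon}\Bigr),
\]
and bounding $|\Disc(K)|^{.2784\ldots}\le X^{.2784\ldots}$ inside the sum and using that there are $O(X)$ such $K$ yields $N_4(A_4,X)=O_\epsilon(X^{1+.2784\ldots+\epsilon})=O_\epsilon(X^{.7784\ldots+\epsilon})$, as claimed. (A more careful partial summation weighting cubic fields by discriminant gives the same exponent since the cubic counting function is $\sim cY$, so the extra saving is only in the constant.)

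The main obstacle, and the only genuinely non-formal input, is Step~2's passage from ``$L$ with resolvent $K$'' to ``$2$-torsion classes in a ray class group of $K$'': one must check that the ray class conductor is confined to primes dividing $\Disc(K)$ with bounded exponent, so that the number of admissible moduli and the index of the relevant subgroup of $\Cl_{\mathfrak m}(K)[2]$ over $\Cl(\OO_K)[2]$ contribute only $O_\epsilon(|\Disc(K)|^\epsilon)$. This is a standard but slightly delicate local computation (comparing the discriminant of $L$ to that of $K$ prime by prime, including the wild primes above $2$ and $3$), and it is exactly the place where the argument of Baily and Wong is reproduced; everything else is bookkeeping plus the cubic-field count of Davenport--Heilbronn plus the new bound of Theorem~\ref{main}.
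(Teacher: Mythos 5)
There is a genuine gap, and it sits exactly at the step you flagged as the ``only genuinely non-formal input.'' Your Step~2 claims that for an $A_4$-quartic field $L$ with cubic resolvent $K$, the extension $\widetilde L/K$ is ramified only at primes dividing $\Disc(K)$, so that the number of $L$ with resolvent $K$ and $|\Disc(L)|<X$ is $O_\epsilon(X^\epsilon h_2(K))$. This is false. The correct discriminant relation (via Artin conductors, since the degree-$3$ factor of $\zeta_L$ is induced from a quadratic character $\psi$ of $K$ cutting out $\widetilde L/K$) is $|\Disc(L)|=|\Disc(K)|\cdot N_{K/\Q}(\mathfrak f(\psi))$, and the conductor $\mathfrak f(\psi)$ may be divisible by primes that are unramified in $K$; it is constrained only by $N_{K/\Q}(\mathfrak f(\psi))\le X/|\Disc(K)|$ (and by being a square, since both discriminants are squares). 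Consequently, for a \emph{fixed} cyclic cubic $K$ the number of $A_4$-quartics with resolvent $K$ and discriminant up to $X$ grows like a power of $X$; the correct per-resolvent bound is Baily's estimate \eqref{beq}, namely $\ll h_2(K)\log^2(|\Disc(K)|)\,(X/|\Disc(K)|)^{1/2}$, not $O_\epsilon(X^\epsilon h_2(K))$. Your secondary relation ``$\Disc(L)=\Disc(K)N_{K/\Q}(\mathfrak f)^2$'' is also not the right normalization, but that is minor compared to the unsupported confinement of ramification to $\Disc(K)$.

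The final summation is also inconsistent as written: using the Davenport--Heilbronn count $O(Y)$ for all cubic fields together with $h_2(K)\le X^{.2784\ldots}$ gives $O_\epsilon(X^{1.2784\ldots+\epsilon})$, not $O_\epsilon(X^{.7784\ldots+\epsilon})$; the identity ``$X^{1+.2784\ldots}=X^{.7784\ldots}$'' in your display is an arithmetic slip. The exponent $.7784\ldots$ genuinely requires a saving of $X^{1/2}$ from the resolvent side, and the paper gets it by combining two ingredients you did not use correctly: (i) Baily's factor $(X/|\Disc(K)|)^{1/2}$ in \eqref{beq}, and (ii) the fact that the resolvents of $A_4$-quartics are \emph{cyclic} cubics, of which there are only $\sim C_1Y^{1/2}$ with $|\Disc|\le Y$ (Cohn), so that partial summation of $|\Disc(K)|^{.2784\ldots-1/2}$ against $N_3(C_3,Y)$ yields $X^{1/2+.2784\ldots}$. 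You do observe that the resolvent is cyclic, but then invoke the $O(Y)$ count for all cubics; with your (incorrect) per-resolvent bound even the $O(Y^{1/2})$ cyclic count would only make the numerology come out right by accident, since the per-resolvent bound itself is where the argument breaks. The fix is essentially to reproduce Baily's Lemma~10/Theorem~2 (counting quadratic ray class characters of $K$ of bounded conductor norm, which is where $h_2(K)$ and the $(X/|\Disc(K)|)^{1/2}$ both enter) and then perform the partial summation against the cyclic cubic counting function, which is exactly the paper's route.
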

This improves on the previously best known bounds of
$O_\epsilon(X^{1+\epsilon})$ and $O_\epsilon(X^{5/6+\epsilon})$ due to Baily~\cite{Baily} and
Wong~\cite{Wong}, respectively.  We also obtain the same bound for the number
of quartic fields of bounded discriminant having any given fixed quadratic
resolvent field (cf.\ Remark~\ref{quadresolve}). 

Our method for bounding the 2-torsion in class groups of number
fields $K$ is relatively elementary, and involves two key results.  The
first result states that we may always suitably choose an ideal $I$ in
any 2-torsion ideal class of a number field $K$ so that its square
$I^2$ has a generator $\beta$ whose norm is small (specifically,
$\ll |\Disc(K)|$) and whose archimedean embeddings are all of
comparable size (specifically, $\ll |\Disc(K)|^{1/n}$):

\begin{theorem}\label{firstres}
Let $K$ be a number field of degree $n$, and let $M_\infty(K)$ denote the set of
infinite places of~$K$.  Then any ideal class of order $2$ in the ring of integers $\OO_K$ of $K$ has a representative
ideal $I\subset \OO_K$ such that $I^2=\beta\OO_K$ and $|\beta|_v\leq |\Disc(K)|^{1/n}$ for all 
$v\in M_\infty(K)$. 
\end{theorem}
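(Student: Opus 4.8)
\noindent The plan is to reduce the statement to a single application of Minkowski's convex body theorem to a suitably weighted box. First I would fix any integral ideal $J\subseteq\OO_K$ representing the given ideal class; since the class has order dividing $2$, the ideal $J^2$ is principal, say $J^2=\gamma\OO_K$ with $\gamma\in\OO_K$, $\gamma\neq 0$. Every integral ideal in the class of $J$ is of the form $\alpha J$ for some nonzero $\alpha\in J^{-1}$ (the condition $\alpha J\subseteq\OO_K$ being equivalent to $\alpha\in J^{-1}$), and then $(\alpha J)^2=\alpha^2\gamma\,\OO_K$. So it suffices to find a nonzero $\alpha\in J^{-1}$ with
\[
|\alpha|_v^2\ \le\ |\Disc(K)|^{1/n}\big/|\gamma|_v \qquad\text{for every }v\in M_\infty(K):
\]
then $I:=\alpha J$ is an integral ideal in the prescribed class with $I^2=\beta\OO_K$ for $\beta:=\alpha^2\gamma\in\OO_K$, and $|\beta|_v=|\alpha|_v^2\,|\gamma|_v\le|\Disc(K)|^{1/n}$ for all infinite $v$, as desired.

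For the geometry-of-numbers step, put $R_v:=\big(|\Disc(K)|^{1/n}/|\gamma|_v\big)^{1/2}$ for each $v\in M_\infty(K)$ and let $B$ be the region in $K\otimes_\Q\R\cong\R^{r_1}\times\C^{r_2}$ defined by $|x|_v\le R_v$ for all $v$; this is a compact, convex, centrally symmetric set (a product of intervals of length $2R_v$ at the real places and round disks of radius $R_v$ at the complex places). I would then compute that $\Vol(B)=2^{r_1}\pi^{r_2}\prod_v R_v^{[K_v:\R]}$, and that $\prod_v R_v^{[K_v:\R]}=|\Disc(K)|^{1/2}/N(J)$, using $\sum_v[K_v:\R]=n$ together with $\prod_v|\gamma|_v^{[K_v:\R]}=|N_{K/\Q}\gamma|=N(J)^2$. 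On the other hand, under the same embedding the fractional ideal $J^{-1}$ is a full lattice of covolume $2^{-r_2}|\Disc(K)|^{1/2}N(J^{-1})=2^{-r_2}|\Disc(K)|^{1/2}/N(J)$. Hence $\Vol(B)=2^{r_1+r_2}\pi^{r_2}\cdot\mathrm{covol}(J^{-1})\ge 2^{n}\cdot\mathrm{covol}(J^{-1})$, the last inequality because $2^{r_1+r_2}\pi^{r_2}\ge 2^{r_1+2r_2}=2^n$ is equivalent to $(\pi/2)^{r_2}\ge 1$. Minkowski's theorem (in its form for compact bodies, needed since equality $(\pi/2)^{r_2}=1$ occurs exactly in the totally real case $r_2=0$) then produces a nonzero $\alpha\in J^{-1}\cap B$, which is precisely the element we needed.

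The nice point --- and the one place where the setup has to be chosen correctly rather than crudely --- is the shape of $B$: weighting the side in the $v$-direction by $|\gamma|_v^{-1/2}$ is exactly what makes the factor $N(J)$ cancel between $\Vol(B)$ and $\mathrm{covol}(J^{-1})$, so we never need to control $N(J)$ and in particular never invoke the Minkowski bound on ideal norms. Beyond that, the only genuine bookkeeping is keeping straight the normalization of $|\cdot|_v$ at complex places (where $[K_v:\R]=2$) and handling the boundary case $r_2=0$; there is no real obstacle. I would also remark in passing that the hypothesis that the class has order $2$ is used only to guarantee that $J^2$ is principal, so the same argument bounds the archimedean sizes of a generator of the square of an arbitrary ideal class.
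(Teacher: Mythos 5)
Your proposal is correct and is essentially the paper's own argument (the paper proves the general order-$m$ statement the same way): one application of Minkowski's theorem to the box with $v$-side scaled by $|\gamma|_v^{-1/2}$ inside the lattice $J^{-1}$, with exactly the same cancellation of $N(J)$ between the volume of the box and the covolume of $J^{-1}$. Your treatment is in fact slightly more careful than the paper's on two minor points: the normalization at complex places in the volume computation, and the boundary case $(\pi/2)^{r_2}=1$ requiring the compact form of Minkowski's theorem.
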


The second result states that the ring of integers $\OO_K$ of a number field $K$ of degree $n$, with $r$ real and $s$ pairs of complex embeddings, cannot be too skew when viewed as a lattice in $\R^n=\R^r\times \C^s$ via its archimedean embeddings up to conjugacy, where we identify $\C$ with the real vector space $\R^2$ in the usual way. For a general integral lattice, the largest successive minimum is bounded by its covolume. However, we prove that one can do quite a bit better than the covolume $2^{-s}|\Disc(K)|^{1/2}$ when the lattice in question is the ring of integers $\OO_K$ in a number field $K$. Specifically, in Section~3 we prove the following theorem.

\begin{theorem}\label{secondres}
Let $K$ be a number field of degree $n$. The length of the largest successive minimum of the lattice $\OO_K\subset \R^n$ is $O(|\Disc(K)|^{1/n})$.  
\end{theorem}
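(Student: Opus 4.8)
The plan is to bound the largest successive minimum of $\OO_K$, viewed as a lattice in $\R^n = \R^r \times \C^s$ under the archimedean embeddings, by producing explicitly $n$ linearly independent algebraic integers in $\OO_K$ all of whose archimedean absolute values are $O(|\Disc(K)|^{1/n})$. The successive minima $\lambda_1 \le \cdots \le \lambda_n$ all coincide with the last one up to the order of magnitude we care about, so it suffices to bound $\lambda_n$. The first step is to recall the standard geometry-of-numbers / Minkowski bound: $\OO_K$ contains a nonzero element of norm $\ll |\Disc(K)|^{1/2}$ in the standard Euclidean norm, hence (since the Euclidean norm dominates each coordinate up to constants) an element whose every archimedean absolute value is $O(|\Disc(K)|^{1/2})$. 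That alone is far too weak; the point of the theorem is to replace the exponent $1/2$ by $1/n$.

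The key idea I would use is to pass through the different/discriminant more carefully, or better, to exploit the \emph{ideal} structure rather than just the lattice structure. Concretely: for each infinite place $v$ there is, by Minkowski's theorem applied to a suitably skewed box (long in the $v$-direction, short in the others), a nonzero $\alpha_v \in \OO_K$ with $|\alpha_v|_v$ possibly large but $|\alpha_v|_w$ small for $w \ne v$; since $\prod_w |\alpha_v|_w = |N(\alpha_v)| \ge 1$, controlling the product of the small coordinates forces $|\alpha_v|_v \ll |\Disc(K)|^{1/n}$ once the box is chosen so that its volume just exceeds $2^n$ times the covolume $2^{-s}|\Disc(K)|^{1/2}$ and the small side-lengths are taken to be $O(|\Disc(K)|^{1/n})$ each. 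The arithmetic that makes this work is that the covolume is $|\Disc(K)|^{1/2}$, there are $n$ coordinates, and $(|\Disc(K)|^{1/n})^{n-1} \cdot (\text{$v$-length}) \gg |\Disc(K)|^{1/2}$ forces the $v$-length to be $\gg |\Disc(K)|^{1/2 - (n-1)/n} = |\Disc(K)|^{-1/2+1/n}$, which is \emph{small}, not large — so in fact every coordinate of $\alpha_v$ is $O(|\Disc(K)|^{1/n})$. One then needs the $n$ elements $\{\alpha_v\}$ (taking a real and imaginary direction separately at complex places) to be linearly independent over $\R$; this can be arranged because each $\alpha_v$ is forced to be genuinely nonzero in its distinguished coordinate by the norm lower bound $|N(\alpha_v)| \ge 1$ combined with the smallness of its other coordinates, so the matrix of coordinates is diagonally dominant after suitable normalization and hence invertible.

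I would organize the write-up as follows. First, reduce to bounding $\lambda_n$ and fix notation for the logarithmic embedding and the covolume $2^{-s}\sqrt{|\Disc(K)|}$ of $\OO_K$. Second, for each archimedean coordinate $i \in \{1,\dots,n\}$ (with complex places contributing two coordinates), apply Minkowski's convex body theorem to the symmetric box $B_i$ with side length $T$ in coordinate $i$ and side length $c\,|\Disc(K)|^{1/n}$ in every other coordinate, where $c$ is an absolute constant chosen so that $\mathrm{vol}(B_i) > 2^n \cdot 2^{-s}\sqrt{|\Disc(K)|}$ as soon as $T \ge c'|\Disc(K)|^{1/n}$ as well — a short computation shows the required inequality reads $T \cdot (c|\Disc(K)|^{1/n})^{n-1} \cdot 2^{?} \gg |\Disc(K)|^{1/2}$, which holds with $T \ll |\Disc(K)|^{1/n}$. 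This yields a nonzero $\alpha_i \in \OO_K$ with all coordinates $O(|\Disc(K)|^{1/n})$. Third, show the $\alpha_i$ span $\R^n$: since $1 \le |N(\alpha_i)| = \prod_j |(\alpha_i)_j|$ and every factor is $O(|\Disc(K)|^{1/n})$, the $i$-th factor must be $\gg |\Disc(K)|^{-(n-1)/n}\cdot|\Disc(K)|^{?}$... — more cleanly, one perturbs the box so coordinate $i$ is \emph{bounded below}, or simply iterates: having found $\alpha_1,\dots,\alpha_k$ spanning a $k$-dimensional subspace $V_k$, choose the next box to also avoid a neighborhood of $V_k$, which is possible since removing a lower-dimensional slab from a sufficiently large box leaves volume exceeding Minkowski's threshold. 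After $n$ steps we get a basis of short vectors, so $\lambda_n = O(|\Disc(K)|^{1/n})$.

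The main obstacle I anticipate is the linear-independence / spanning step: Minkowski's theorem only guarantees \emph{a} short vector in each skewed box, and a priori the vectors produced in different directions could all lie in a common hyperplane. The honest fix is the iterative "avoid a slab" argument, which requires checking that the volume lost by excising a thin neighborhood of the previously-spanned subspace is a small fraction of the box volume — this is where one must be slightly careful that the constants in the skewed box (long in one direction, of total volume just above the Minkowski bound) leave enough room. The norm lower bound $|N(\alpha)| \ge 1$ is the other essential input and is what converts "all coordinates small" into "the distinguished coordinate is not too small", preventing degeneracy; making the two mechanisms cooperate cleanly is the technical heart of the argument, but neither presents a serious difficulty once the box dimensions are tuned.
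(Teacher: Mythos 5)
There is a genuine gap at exactly the step you flag as the ``technical heart'': producing $n$ \emph{linearly independent} short vectors. Minkowski's convex body theorem applied to your skewed boxes $B_i$ gives no control over which nonzero lattice point you get; in particular the element $1\in\OO_K$ and its small integer multiples lie in every one of your boxes, so nothing forces the point to depend on $i$ or to have large $i$-th coordinate. Your first fix (diagonal dominance from $|N(\alpha_i)|\ge 1$) does not work: the inequality $\prod_j|(\alpha_i)_j|\ge 1$ together with all coordinates being $O(|\Disc(K)|^{1/n})$ only yields the uniform lower bound $|(\alpha_i)_j|\gg |\Disc(K)|^{-(n-1)/n}$ for \emph{every} coordinate $j$, with no preference for the distinguished one, so no dominance. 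Your second fix (excise a slab around the previously spanned subspace $V_k$ and re-apply Minkowski) is also invalid: a box minus a slab is not convex, and no volume argument can force lattice points off a proper subspace --- a lattice such as $\Z\epsilon\times\Z M\subset\R^2$ has arbitrarily many points in a large box, all lying on a line, even though the box volume vastly exceeds $2^n$ times the covolume. This is not a repairable technicality within your framework: the statement is \emph{false} for general lattices of covolume $2^{-s}|\Disc(K)|^{1/2}$, and the only arithmetic input you use beyond the covolume is $|N(\alpha)|\ge 1$, which via Minkowski's second theorem ($\lambda_1\cdots\lambda_n\asymp$ covolume, $\lambda_1\gg 1$) gives only the trivial bound $\lambda_n\ll|\Disc(K)|^{1/2}$ --- precisely the bound the theorem is designed to beat.

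The paper's proof uses an ingredient your proposal never touches: the multiplicative structure of $\OO_K$. It takes a Minkowski reduced basis $1,v_1,\dots,v_{n-1}$ with $|v_1|\cdots|v_{n-1}|\asymp|\Disc(K)|^{1/2}$, forms the $(n-2)\times(n-2)$ matrix $A=(a_{ij})$ where $a_{ij}$ is the coefficient of $v_{n-1}$ in the product $v_iv_j$, and shows $A$ is nondegenerate: otherwise some nonzero $r$ in the span of $v_1,\dots,v_{n-2}$ would make the codimension-one subspace spanned by $1,v_1,\dots,v_{n-2}$ into a $\Q(r)$-vector space, which is impossible since $[\Q(r):\Q]\ge 2$. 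Choosing a permutation $\pi$ with $a_{i,\pi(i)}\neq 0$ then gives $|v_{n-1}|\ll|v_iv_{\pi(i)}|$ for each $i$, and multiplying these inequalities yields $|v_{n-1}|^{n}\ll\prod_{i=1}^{n-1}|v_i|^{2}\ll|\Disc(K)|$. If you want to salvage your approach, you would need to inject some comparable use of the ring (or field) structure; covolume plus the norm lower bound, manipulated through convex bodies alone, cannot yield the exponent $1/n$.
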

The exponent in Theorem~\ref{secondres} is optimal, as can be seen, for example, via the fields $\Q(\sqrt[n]{p})$. 

These two results together immediately imply, by elementary geometry-of-numbers arguments, 
an upper estimate of the
form $O(|\Disc(K)|^{1/2})$ for the number of possible generators
$\beta$ of~$I^2$, which is already an improvement upon the trivial
estimate from Brauer--Siegel. We then obtain a final saving by providing a power-saving upper estimate for the count of just those elements in the region $\{\beta\in\OO_K:|\beta_v|\leq |\Disc(K)|^{1/n}\:\forall v\in M_\infty(K)\}$ that have square norm, for which we appeal to a result of Bombieri and Pila~\cite{BP} on the number of integral points in bounded regions on curves. In the case $n=3$, we can do even better by appealing to the work of Helfgott and Venkatesh~\cite{HV} on integral points on elliptic curves, and via the arguments of~\cite{T} these improvements are carried over to the case $n=4$.

An analogous proof can in fact be carried out over function fields, with all the difficulties in the geometry of numbers replaced by the Riemann--Roch theorem. As a result, we obtain a nontrivial upper bound on the 2-torsion in the class groups of function fields in which the dependence is purely upon the genus. Specifically, we prove:

\begin{theorem}\label{main5}
Let $C$ be a curve of genus $g$ over a finite field $k$. Then $$|\pic^0(C)(k)[2]|\leq \frac{|k|^{g+1}-1}{|k|-1}.$$
Moreover, if $C$ admits a degree $n$ map to $\P^1$ over $k$, then $$|\pic^0(C)(k)[2]|\ll_n |k|^{(1-\frac{1}{n})g}.$$
\end{theorem}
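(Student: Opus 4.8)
The plan is to mirror the number-field argument, using Riemann--Roch in place of the geometry-of-numbers inputs (Theorems \ref{firstres} and \ref{secondres}). The first, unconditional, bound will come from a direct cohomological/linear-algebra count. Given a $2$-torsion class $[I]$ in $\pic^0(C)(k)$, represented by a divisor $D$ of degree $0$, we have $2D = \div(\beta)$ for some function $\beta$ in the function field of $C$; equivalently $\beta$ is a rational function whose divisor is twice a degree-$0$ divisor. The key move is to normalize: after translating $D$ by a suitable degree-$0$ principal divisor we can arrange that $2D \geq -E$ for a fixed effective divisor $E$ of degree $g+1$ (this is the function-field avatar of Theorem \ref{firstres} --- in the number-field case one chose $\beta$ of small norm; here one uses that $L(E)$ is nonzero once $\deg E \geq g+1$, so every degree-$0$ class has a representative $\geq -E$ after suitable adjustment, and we may take $E$ to be any fixed effective divisor of degree $g+1$, e.g.\ supported appropriately). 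Then $\beta \in L(2E)$, a $k$-vector space of dimension at most $\deg(2E)+1 = 2g+3$ by Riemann--Roch; but more to the point, the map $[I] \mapsto \beta$ is injective up to scalars (two functions with the same divisor differ by a constant), so $|\pic^0(C)(k)[2]|$ is bounded by the number of lines in $L(2E)$, hence by $(|k|^{2g+2}-1)/(|k|-1)$-ish. To get the sharper $(|k|^{g+1}-1)/(|k|-1)$ one argues more carefully: the condition that $\div(\beta)$ be \emph{exactly} twice a divisor cuts the relevant space roughly in half on the level of dimension (a "square" condition), landing $\beta$ in (a coset of) a space of dimension $\leq g+1$; counting lines there gives the stated bound. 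I would set this up via the exact sequence relating $L(E)$, $L(2E)$, and the "squaring" map, or equivalently via the well-known identification of $\pic^0(C)(k)[2]$ with a subquotient of $H^0$ of a suitable sheaf of dimension $\leq g+1$ over $\F_2$-type considerations.

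For the second bound, assume $\phi\colon C \to \P^1$ has degree $n$ over $k$. Now I would play the archimedean-size constraint of Theorem \ref{firstres} against the Bombieri--Pila-type input, exactly as in the number-field case but with everything made exact by Riemann--Roch. Pull back the fibers of $\phi$: let $F = \phi^*(\infty)$, an effective divisor of degree $n$. For a $2$-torsion class, after normalizing as above we get $\beta$ with $\div(\beta) = 2D' $ for $D'$ in a bounded "box" of divisors, i.e.\ $\beta$ lies in $L(mF)$ for $m \approx (g+1)/n$ (so that $\deg(mF) \approx g+1$ is the critical degree), and $\beta$ is a square in an appropriate sense. The space $L(mF)$ is spanned by functions of the form $x^i y^j$ where $x = \phi$ and $y$ generates the function field over $k(x)$ --- precisely the setup where "integral points on a curve in a box" estimates apply: the number of $\beta \in L(mF)$ that are squares (of functions in the smaller space $L(\lceil m/2 \rceil F)$, up to the degree bookkeeping) is governed by how a $1$-parameter family of such $\beta$'s can consist of squares, and a Bombieri--Pila / determinant-method argument over $\F_q[t]$ (or a direct dimension count, since over function fields this is cleaner than over $\Z$) yields $\ll_n |k|^{(1-1/n)g}$. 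Concretely, the "trivial" count of $\beta \in L(mF)$ is $\approx |k|^{g}$ (dimension $\approx g/ n \cdot n = g$ after accounting for all monomials $x^i y^j$ with the right weighting), and the square condition saves a factor $|k|^{g/n}$ because it is essentially $\lceil$half the $x$-degrees$\rceil$ worth of linear conditions --- giving exponent $g(1 - 1/n)$.

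The main obstacle I anticipate is getting the \emph{exact} exponents and constants right in the second bound, specifically pinning down the dimension of the space of "square" functions in $L(mF)$ and verifying that the saving is precisely $|k|^{g/n}$ rather than something weaker. In the number-field case one loses $\epsilon$'s and needs Bombieri--Pila because the "box" is not a vector space; over function fields the box genuinely is an $\F_q$-vector space, so the count should be an honest dimension computation --- but one must be careful that the squaring map $L(\lceil m/2\rceil F) \to L(mF)$ interacts correctly with the degree-$0$ (i.e.\ $\pic^0$ rather than $\pic$) constraint and with the choice of base divisor, and that different $2$-torsion classes really do give essentially disjoint sets of $\beta$'s. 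I would handle this by working uniformly in $m$ and extracting the leading-order behavior of $\ell(mF) = \dim L(mF)$ from Riemann--Roch ($\ell(mF) = mn + 1 - g$ once $mn > 2g-2$), choosing $m = \lceil (g+1)/n \rceil$ so that $\ell(mF) \approx g$, and then showing the square locus has dimension $\approx \ell(\lceil m/2 \rceil F) \approx g/2 + \ldots$ --- wait, that gives a saving of a full half, which is too strong, so the correct accounting must instead track only the top $\approx g/n$ of the $x$-adic coefficients as the genuinely constrained ones; sorting out this bookkeeping carefully is where the real work lies, and it is the analogue of the delicate Bombieri--Pila estimate in the number-field proof.
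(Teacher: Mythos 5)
Your first-part skeleton is the right one and is essentially the paper's: normalize the $2$-torsion representative $D$ using Riemann--Roch so that the (unique up to scalars) function $f$ with $\div(f)=2D'$ lands in $H^0(C,2E)$ for a fixed auxiliary divisor $E$, and then bound the number of classes by the number of lines in that space. But your numerology has a genuine gap, and the repair you propose does not work. Take $\deg E=g$, not $g+1$: Riemann--Roch already gives $h^0(E-D)>0$ whenever $\deg(E-D)\geq g$, so degree $g$ suffices for the normalization, and then $\deg(2E)=2g>2g-2$ forces $h^1(2E)=0$, whence the \emph{exact} Riemann--Roch formula gives $h^0(2E)=2g-g+1=g+1$. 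Counting all of $\P H^0(C,2E)$ then yields $\frac{|k|^{g+1}-1}{|k|-1}$ on the nose, with no further saving needed. By contrast, you used $\deg E=g+1$ together with the trivial bound $\ell(2E)\leq\deg(2E)+1$, which only gives $|k|^{2g+2}$-type counts (worse even than the trivial bound $|\pic^0(C)(\overline k)[2]|\leq 2^{2g}$ for large $|k|$), and your proposed rescue --- that ``the condition that $\div(f)$ be twice a divisor cuts the relevant space roughly in half on the level of dimension'' --- is not a legitimate step: the locus $\{f\in H^0(C,2E): 2\mid \div(f)\}$ is not a linear subspace (it strictly contains the squares of elements of $H^0(C,E)$ precisely when there is nontrivial $2$-torsion, which is the whole point), so there is no dimension to halve. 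The sharp constant comes from choosing $\deg E$ minimally and using Riemann--Roch exactly, not from exploiting the square condition.

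For the second part your plan coincides with the paper's, which simply runs the argument of Theorem~\ref{main} with the function-field analogue of Bombieri--Pila due to Sedunova in place of \cite[Theorem~5]{BP}. However, your suggestion that one might instead get away with ``an honest dimension computation'' because the box is an $\F_q$-vector space founders on the same obstruction as above: the set of $f$ in the relevant Riemann--Roch space whose norm to $k(x)$ is a square (equivalently, for which the associated fiber condition holds) is not linear, so a dimension count cannot by itself produce the saving of $|k|^{g/n}$; the Bombieri--Pila/determinant-method input is genuinely needed, exactly as in the number-field case. Your own back-and-forth about whether the saving should be a factor of $|k|^{g/2}$ or $|k|^{g/n}$ reflects this: the correct accounting is not a count of linear conditions but the point-count on the auxiliary curves $y^2=g_\beta(x)$ supplied by Sedunova's theorem.
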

The second part of the theorem is obtained by appealing to a function-field analogue of the aforementioned result of Bombieri and Pila~\cite{BP} established by Sedunova~\cite{Sedunova}. 


This paper is organized as follows. In Section~2, we discuss convenient choices of ideal representatives for ideal classes of order $m$ in number fields of degree $n$, and we prove Theorem~\ref{firstres}.  In Section~3, we then consider the lattice structure and successive minima in rings of integers in number fields, and prove Theorem~\ref{secondres}.  We combine these two key results in Section~4, together with the method of Bombieri and Pila, to prove our main Theorem~\ref{main}. In Section~5, we then appeal to the work of Helfgott and Venkatesh and the fifth-named author to obtain the stated improvements to Theorem~\ref{main}
in the cubic and quartic cases, respectively, and we also prove Theorem~\ref{main2}. In Section~6, we apply our results to improve the best known bounds on the number
of $A_4$-quartic fields of bounded discriminant. Finally, in Section 7, we discuss the function field analogue and prove Theorem~\ref{main5}.

\section{Representative ideals $I$ for $m$-torsion ideal classes such that $I^m$ has a generator $\beta$ of small norm 
and archimedean embeddings of comparable size}
\label{2torsec}

In this section, we prove that in any $m$-torsion
ideal class of a number field $K$, there is an ideal $I$ such that $I^m$ 
has a generator $\beta$ which is both of small norm and whose
archimedean embeddings are all of comparable size.  
More precisely, we prove the following theorem:

\begin{theorem}\label{firstres2}
Let $K$ be a number field of degree $n$, and let $M_\infty(K)$ denote the set of infinite places of $K$. 
Then any ideal class of order $m$ in the ring of integers $\OO_K$ of $K$ has a representative
ideal $I\subset \OO_K$ such that $I^m=\beta\OO_K$ and $|\beta|_v\:\leq\:|\Disc(K)|^{m/2n}$ for all $v\in M_\infty(K)$.
\end{theorem}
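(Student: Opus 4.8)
The plan is to obtain Theorem~\ref{firstres2} from a single application of Minkowski's convex body theorem, with the body chosen so as to force a generator of $I^m$ to be simultaneously of small norm and archimedean-balanced. For $v\in M_\infty(K)$ write $n_v=[K_v:\R]\in\{1,2\}$, so that $\sum_v n_v=n$ and $\prod_v|x|_v^{n_v}=|N_{K/\Q}(x)|$ for $x\in K^\times$ (here $|\cdot|_v$ is the ordinary absolute value of the corresponding embedding). I would fix any nonzero integral ideal $I_0$ in the given class $\mathfrak c$ of order $m$; since $\mathfrak c^m$ is trivial, $I_0^m$ is principal, say $I_0^m=\gamma_0\OO_K$, whence $\prod_v|\gamma_0|_v^{n_v}=|N_{K/\Q}(\gamma_0)|=N(I_0)^m$. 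I then seek a nonzero $\alpha\in I_0^{-1}$ — so that $I:=\alpha I_0\subseteq\OO_K$ is an integral ideal in $\mathfrak c$ — satisfying $|\alpha|_v\le c_v$ for every $v$, where
\[
c_v\;:=\;|\Disc(K)|^{1/2n}\,\big/\,|\gamma_0|_v^{1/m}.
\]
For such an $\alpha$ the element $\beta:=\alpha^m\gamma_0$ does the job: indeed $\beta\OO_K=(\alpha\OO_K)^m(\gamma_0\OO_K)=(\alpha I_0)^m=I^m$, and $|\beta|_v=|\alpha|_v^m|\gamma_0|_v\le c_v^m|\gamma_0|_v=|\Disc(K)|^{m/2n}$ for all $v$.

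To produce such an $\alpha$ I would regard $I_0^{-1}$ as a full lattice in $K\otimes_\Q\R\cong\R^r\times\C^s\cong\R^n$ under the archimedean embeddings, identifying each complex place's copy of $\C$ with $\R^2$ in the usual way. The region $B=\{x:|x|_v\le c_v\ \forall v\}$ is a product of closed intervals and closed disks, hence compact, convex and symmetric, with $\Vol(B)=2^r\pi^s\prod_v c_v^{n_v}$. By the definition of $c_v$ together with $\sum_v n_v=n$ and $\prod_v|\gamma_0|_v^{n_v}=N(I_0)^m$, this simplifies to $\Vol(B)=2^r\pi^s|\Disc(K)|^{1/2}/N(I_0)$; meanwhile the covolume of the lattice $I_0^{-1}$ equals $N(I_0^{-1})\cdot 2^{-s}|\Disc(K)|^{1/2}=2^{-s}|\Disc(K)|^{1/2}/N(I_0)$. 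Therefore
\[
\frac{\Vol(B)}{2^{n}\cdot\mathrm{covol}(I_0^{-1})}=\frac{2^{r}\pi^{s}}{2^{r+s}}=\Bigl(\tfrac{\pi}{2}\Bigr)^{s}\;\ge\;1,
\]
and so Minkowski's theorem — in the version valid for closed bounded convex symmetric bodies, which covers the equality case $s=0$ — supplies a nonzero $\alpha\in I_0^{-1}$ with $|\alpha|_v\le c_v$ for all $v$, completing the argument.

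The only delicate point is the bookkeeping that makes the two volume computations line up: the exponent $1/2n$ in the definition of $c_v$ is forced by the target bound $|\Disc(K)|^{m/2n}$ on $\beta=\alpha^m\gamma_0$, and the fortunate feature is that with this choice $\prod_v c_v^{n_v}=|\Disc(K)|^{1/2}/N(I_0)$, so the factor $N(I_0)$ cancels against the covolume and the residual factor $(\pi/2)^s\ge1$ is exactly the slack Minkowski requires. (The same computation shows $N(I)=|N_{K/\Q}(\alpha)|\,N(I_0)\le\bigl(\prod_v c_v^{n_v}\bigr)N(I_0)=|\Disc(K)|^{1/2}$, so $I$ is automatically a reduced representative; in particular one need not choose $I_0$ of small norm to begin with.) I anticipate no essential obstacle beyond pinning down these normalizations and invoking the closed-body form of Minkowski's theorem in the totally real case.
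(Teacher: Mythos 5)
Your proof is correct and is essentially the paper's own argument: both apply Minkowski's theorem to the lattice $I_0^{-1}$ with the box whose side at each archimedean place is $|\Disc(K)|^{1/2n}$ divided by the $1/m$-th power of the absolute value of a generator of $I_0^m$, and then take $\beta=\alpha^m\gamma_0$. Your write-up is in fact slightly more careful than the paper's on the volume bookkeeping (the factor $(\pi/2)^s$ and the need for the closed-body form of Minkowski when $s=0$).
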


\begin{proof}
  Suppose that $[I]$ is an ideal class of $\OO_K$ of order
  $m$. By assumption, we may write $I^m=(\alpha)$ for some element
  $\alpha\in\OO_K$. However, we would like to find an element
  $\beta\in\OO_K$ such that $\beta=\alpha\kappa^m$ for some $\kappa\in
  K^\times$ (so that $\beta$ is
  a generator of $(\kappa I)^m$ where $\kappa I$ is in the same ideal
  class as $I$), and such that $\beta$ is both of small norm and has 
  archimedean embeddings that are all of comparable size. 

To prove 
that such an element $\beta=\alpha\kappa^m$ exists, 
choose a nonzero element $\kappa\in I^{-1}$ such that $$|\kappa|_v\leq
|\Disc(K)|^{1/2n}|\alpha|_v^{-1/m}$$ for all $v\in M_\infty(K)$; such a $\kappa\in I^{-1}$ exists
  by Minkowski's Theorem, since the lattice $I^{-1}$,  when embedded again in $\R^n\cong R^r\times \C^s$ via the $r$ real and $s$ complex embeddings of $K$ up to conjugacy, has covolume $2^{-s}|\Disc(K)|^{1/2}|N(I)|^{-1}$, 
  and
  we are searching for $\kappa$ in a centrally symmetric convex body of volume 
  $$\prod_{v\rm{\;real}}(2\,|\Disc(K)|^{1/2n}|\alpha|_v^{-1/m})
  \prod_{v\rm{\;complex}}(\sqrt{\pi}\,|\Disc(K)|^{1/2n}|\alpha|_v^{-1/m})
  \geq
  2^{n}\cdot2^{-s}\,|\Disc(K)|^{1/2}N(I)^{-1}.$$ 

For such an element $\kappa$, we have 
$|\alpha\kappa^m|_v\leq 
|\Disc(K)|^{m/2n}$. The fact that $\kappa\in I^{-1}$ ensures that $\beta=\alpha\kappa^m$
is in $\OO_K$. 
Thus, we have produced a
  $\beta\in\OO_K$ of the form $\alpha\kappa^m$ whose archimedean embeddings all have size bounded by
  $|\Disc(K)|^{m/2n}$, as desired.
\end{proof}

\noindent
The case $m=2$ of Theorem~\ref{firstres2} yields Theorem~\ref{firstres} as stated in the introduction.

\section{Regularity of shapes of rings of integers of number fields}

In order to use Theorem~\ref{firstres} to bound the number of $2$-torsion classes in class groups of number fields, it will be important for us to know that the lattices underlying the ring of integers in number fields cannot be too ``skew''.  We now make this precise. 

Let $K$ be a number field of a given degree $n$ with $r$ real and $s$ conjugate pairs of complex embeddings, and consider again the  embedding of $K$ into $\R^n=\R^r\times \C^s$ via its archimedean embeddings up to complex conjugation, where as before we identify $\C$ with~$\R^2$ in the usual way.
Then the ring of integers $\OO_K$ forms a lattice in $\R^n$ of covolume $2^{-s}|\Disc(K)|^{1/2}$. The length $|\cdot|$ induced by the $($conjugate$)$ trace pairing equals
the sum of the squares of the norms of all its archimedean
absolute values, i.e., the usual length induced by the embedding of $\OO_K$ into~$\R^n$.
  
It is also known (see, e.g., \cite[Lecture X, \S 6]{Siegel}) that there exists a {\itshape Minkowski reduced basis} $\{1,v_1,v_2,\ldots,v_{n-1}\}$ for $\OO_K$
with $1 \leq |v_1| \leq \cdots \leq |v_{n - 1}|$ and such that $|v_i| \ll |v|$ for every $i \leq {n - 2}$ and every
$v \not \in \textnormal{Span}\{1, v_1, \dots, v_{i - 1}\}$, and which further satisfies
\[
|v_1| \cdots |v_{n - 1}| \asymp |\Disc(K)|^{1/2}.
\]

We now prove that the lattice $\OO_K$ is not too skew -- more specifically, we prove that
the largest successive minimum $v_{n - 1}$ of $\OO_K$ is 
comparable in size to the diameter of the
region in $\R^n$ occurring in Theorem~\ref{firstres} where we wish to count elements $\beta\in\OO_K$.
The following is a restatement of Theorem~\ref{secondres}:

\begin{theorem}\label{secondres2}
  Let $K$ be a number field of degree $n$. With notation as introduced above, we 
  have
  $$|v_{n-1}|\ll |\Disc(K)|^{1/n}.$$
\end{theorem}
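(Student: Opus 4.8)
The plan is to exploit two structures on $\OO_K$ simultaneously: the geometry‑of‑numbers data of the Minkowski‑reduced basis $\{1,v_1,\dots,v_{n-1}\}$ (namely $|v_1|\cdots|v_{n-1}|\asymp|\Disc(K)|^{1/2}$, $|1|\asymp_n 1$, and the reduction property $|v_i|\ll_n\|w\|$ for every $w\in\OO_K$ not in $V_{i-1}:=\mathrm{Span}\{1,v_1,\dots,v_{i-1}\}$, $i\le n-2$), and the fact that $\OO_K$ is a \emph{ring}. The ring‑theoretic input I would isolate first is: $V_k$ is closed under multiplication iff it is a $\Q$‑subalgebra of $K$, in which case it is a subfield, so $(k+1)\mid n$. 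Since $\gcd(n-1,n)=1$, for $n\ge 3$ the hyperplane $V_{n-2}$ is never multiplicatively closed; and if $v_mV_{n-2}\subseteq V_{n-2}$ for some $m$ then $[\Q(v_m):\Q]$ would divide both $n-1$ and $n$, forcing $v_m\in\Q$, which is impossible. Hence for every $m$ there is an index $b\le n-2$ with $v_mv_b\notin V_{n-2}$.

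Next I would turn such an escaping product into a covolume estimate. Given $P=v_mv_b\notin V_{n-2}$, the set $\{1,v_1,\dots,v_{n-2},P\}$ is a $\Q$‑basis of $K$ and spans a full‑rank sublattice $\Lambda\subseteq\OO_K$. Then $2^{-s}|\Disc(K)|^{1/2}=\mathrm{covol}(\OO_K)\le\mathrm{covol}(\Lambda)\le|1|\cdot|v_1|\cdots|v_{n-2}|\cdot\|P\|$ by Hadamard, and $\|P\|\le\sqrt n\,|v_m||v_b|$ since $|P|_w=|v_m|_w|v_b|_w\le\|v_m\|\,\|v_b\|$ at each archimedean place $w$. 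Dividing by $|v_1|\cdots|v_{n-1}|\asymp|\Disc(K)|^{1/2}$ gives $|v_{n-1}|\ll_n|v_m||v_b|$. Taking $m=1$ this reads $|v_{n-1}|\ll_n|v_1||v_{n-2}|$, which already finishes the cases $n=3,4$: for $n=3$, $v_1$ is automatically a primitive element, so $\Z[v_1]\subseteq\OO_K$ has covolume $\le C_3|v_1|^3$, forcing $|v_1|\gg|\Disc(K)|^{1/6}$ and hence $|v_2|\asymp|\Disc(K)|^{1/2}/|v_1|\ll|\Disc(K)|^{1/3}$; for $n=4$, $|v_3|\ll|v_1||v_2|$ combined with $|v_1||v_2||v_3|\asymp|\Disc(K)|^{1/2}$ gives $|v_3|^2\ll|\Disc(K)|^{1/2}$.

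For general $n$ the main obstacle is that the bound $|v_{n-1}|\ll_n|v_m||v_b|$ is only useful when $v_m,v_b$ can be taken among the \emph{short} basis vectors, and a priori the skewness of $\OO_K$ could push all low‑degree products back into $V_{n-2}$. I would rule this out by a second use of the subfield constraint. Let $k$ be the largest index for which $V_k$ is multiplicatively closed; then $V_k$ is a subfield $F$, so $(k+1)\mid n$ — in particular $k+1\le n/2$ — while no $V_j$ with $k<j\le n-2$ is multiplicatively closed. Descending the flag $V_k\subset V_{k+1}\subset\cdots\subset V_{n-2}$ and applying the reduction property to the product that escapes each $V_j$, together with the auxiliary fact that $F$ cannot be too large (its subdiscriminant satisfies $|\Disc(F)|^{[K:F]}\mid|\Disc(K)|$, so that $|v_1|\cdots|v_k|\asymp|\Disc(F)|^{1/2}$ is controlled), one assembles a lower bound on $|v_1|\cdots|v_{n-2}|$ — equivalently the inequality $(n-2)\log|v_{n-1}|\le 2\sum_{i=1}^{n-2}\log|v_i|+O_n(1)$ — which is exactly $|v_{n-1}|\ll_n|\Disc(K)|^{1/n}$. (The guiding heuristic: were $|v_{n-1}|$ too large compared with $|\Disc(K)|^{1/n}$, every degree‑$2$ monomial in the short basis vectors would have length below the threshold needed to escape $V_{n-2}$ and hence lie inside it, and iterating would force the span of the short vectors to be multiplicatively closed — a subfield of forbidden dimension.) Optimality of the exponent $1/n$ is witnessed, as remarked in the text, by $K=\Q(\sqrt[n]{p})$, where $\sqrt[n]{p}^{\,n-1}$ already has length $\asymp|\Disc(K)|^{1/n}$.
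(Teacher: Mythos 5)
Your argument is complete for $n=3,4$, and your opening observation---that no basis vector $v_m$ can satisfy $v_mV_{n-2}\subseteq V_{n-2}$ because $[\Q(v_m):\Q]$ would then divide both $n-1$ and $n$---is exactly the right mechanism. But for $n\geq 5$ there is a genuine gap. What you establish is only that each \emph{row} of the symmetric matrix $A=(a_{ij})$ (where $a_{ij}$ is the $v_{n-1}$-coefficient of $v_iv_j$) is nonzero, i.e., for each $m$ \emph{some} product $v_mv_{b(m)}$ escapes $V_{n-2}$. This yields $|v_{n-1}|\ll|v_m||v_{b(m)}|$, but since $b$ need not be a bijection the product over $m$ need not telescope to $\prod_i|v_i|^2$: a symmetric matrix can have every row nonzero and yet admit no permutation $\pi$ with $a_{i,\pi(i)}\neq0$ for all $i$ (e.g.\ $\bigl(\begin{smallmatrix}0&0&1\\0&0&1\\1&1&0\end{smallmatrix}\bigr)$), and already for $n=5$ the pattern $b(1)=b(2)=3$, $b(3)=1$ leaves you with $|v_4|^3\ll|v_1|^2|v_2||v_3|^3$, which is weaker than the required $|v_4|^3\ll(|v_1||v_2||v_3|)^2$ whenever $|v_3|\gg|v_2|$. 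The paper closes precisely this gap by running your stabilizer argument with an \emph{arbitrary} nonzero $r\in\mathrm{Span}\{v_1,\dots,v_{n-2}\}$ rather than with a basis vector: a null vector of $A$ produces such an $r$ with $rL\subseteq L$ for $L=\mathrm{Span}\{1,v_1,\dots,v_{n-2}\}$, impossible since $L$ would then be a $\Q(r)$-subspace of codimension $1$ in $K$. Hence $A$ is \emph{nondegenerate}, the Leibniz expansion of $\det A$ furnishes a genuine permutation $\pi$ with $a_{i,\pi(i)}\neq0$ for all $i$, and multiplying the resulting bounds $|v_{n-1}|\ll|v_iv_{\pi(i)}|$ over $i$ gives $|v_{n-1}|^{n-2}\ll\prod_{i=1}^{n-2}|v_i|^2$, which is the inequality you need.

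Your proposed repair via the flag $V_k\subset\cdots\subset V_{n-2}$ does not substitute for this. The escape of some $v_av_b$ from a non-closed $V_j$ only tells you that the \emph{top} coordinate $c$ of $v_av_b$ satisfies $c>j$ and $|v_c|\ll|v_a||v_b|$; nothing forces $c=n-1$. In the worst case $c=j+1$ at every stage, and the resulting chain $|v_{j+1}|\ll|v_j|^2$ compounds doubly exponentially and does not produce $(n-2)\log|v_{n-1}|\le2\sum_{i\le n-2}\log|v_i|+O_n(1)$; that inequality is asserted but never derived from the ingredients you list. Likewise the closing heuristic does not go through: knowing $V_j\cdot V_j\subseteq V_{n-2}$ does not make $V_j$ multiplicatively closed unless one already controls where in the flag the products land, which is the very thing at issue.
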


\begin{proof}
Consider the $(n-2)
\times (n-2)$ symmetric matrix $A:=(a_{ij})$, where $a_{ij}$ is 
the coefficient of $v_{n-1}$ in the expansion of $v_iv_j$ with respect to the basis $1, v_1,\dots,v_{n-1}$. We claim that $A$ 
is nondegenerate. For if not, then there is some nonzero element $r$
in the span of $v_1,\dots,v_{n-2}$ such that multiplication by $r$
leaves invariant the $\Q$-vector space $L$ spanned by $1,v_1, \dots,v_{n-2}$. 
This means that multiplication by the field $\Q(r)$ leaves $L$ invariant; i.e., $L$ is a $\Q(r)$-vector subspace of~$K$.  But this is a contradiction, since the $\Q$-dimension of $\Q(r)$ is at least 2, while $L$ has codimension 1 in $K$ as a $\Q$-vector space.  Thus $A$ is nondegenerate.

It follows that there is some permutation $\pi\in
S_{n-2}$ such that $a_{i,\pi(i)}\neq 0$ for all $1\leq i\leq
n-2$. 
As, for each $i$, both $v_{n - 1}$ and $v_i v_{\pi (i)}$ complete $\{1, v_1, \dots, v_{n - 2}\}$ to a full rank lattice, we conclude
by construction that
$|v_{n-1}|\ll |v_iv_{\pi(i)}|$. Multiplying over all $i$ yields
$|v_{n-1}|^{n-2}\ll \prod_{i=1}^{n-2} |v_i|^2$, and then multiplying both
sides by $|v_{n-1}|^2$ gives
$$|v_{n-1}|^n\ll \prod_{i=1}^{n-1} |v_i|^2 \ll |\Disc(K)|\,,$$ as desired.
\end{proof}

\section{Bounding the 2-torsion in class groups of number fields of arbitrary degree}

We now show how Theorems~\ref{firstres} and \ref{secondres} imply
Theorem~\ref{main}.  Indeed, let $B$ denote the box in
Theorem~\ref{firstres}, viewed inside $\R^n$, containing the set of $\beta$ we wish to
count, i.e., containing all elements of $\OO_K\subset \R^n$ whose archimedean
embeddings all have size $\ll |\Disc(K)|^{1/n}$.
Theorem~\ref{secondres} shows that, for a positive constant $C = C(n)$
depending only on the degree $n$ of $K$, the dilated box $CB$ covers the
fundamental domain $F$ for $\OO_K$ in $\R^n$ defined by its Minkowski basis.
Thus, the number of possible $\beta\in \OO_K$ lying in $B$ is at most 
$O(\Vol(B)/\Vol(F)) = O(|\Disc(K)|^{1/2})$, which is already better
than the trivial bound coming from Brauer--Siegel. To prove
Theorem~\ref{main}, however, we require a further power saving on this
bound, which we will obtain by giving an upper estimate for the number of just those $\beta$ 
whose norm is a square.

First, we handle the case where $K$ contains a subfield $F$ of index $2$.  In that case, by genus theory, the 2-torsion in the class group of $K$
 is bounded by the 2-torsion in the class group of~$F$ multiplied by $O(2^t)$, where $t$ is the number of ramified primes in $K/F$. Since $2^t$ is subpolynomial in $|\Disc(K)|$, and $|\Disc(F)|\leq |\Disc(K)|^{1/2}$, we 
 obtain a bound of $O_{\epsilon}(|\Disc(K)|^{1/4+\epsilon})$ in this case, by Brauer--Siegel applied to $F$ together with genus theory for $K/F$.
 
Hence we will assume from now on that $K$ does not contain any subfield of index $2$. For $\beta\in B\cap \OO_K$, we consider the line formed by $\beta+\Z$, and the polynomial $f_{\beta}(m)=\textrm{Norm}_{K/\Q}(\beta-m)$.

\begin{lemma} 
If $K$ does not contain an index $2$ subfield, then the number of $\beta\in B\cap \OO_K$ such that $f_{\beta}(x)$ is a square in $\Q[x]$ is at most $O(|\Disc(K)|^{1/4})$.
\end{lemma}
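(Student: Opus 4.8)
The plan is to pin down exactly which $\beta$ are ``bad'' (meaning $f_\beta(x)$ is a square in $\Q[x]$), to observe that every such $\beta$ lies in the ring of integers of a proper subfield of $K$ of \emph{even} index, and then to count integral points of these subfields inside the box $B$. First I would compute $f_\beta$ explicitly. If $\beta\in\OO_K$ has monic minimal polynomial $g_{\beta}(x)\in\Z[x]$ over $\Q$, of degree $d:=[\Q(\beta):\Q]$ (and separable, as we are in characteristic zero), then the characteristic polynomial of multiplication by $\beta$ on $K$ is $g_{\beta}(x)^{n/d}$, so
\[
f_{\beta}(x)=\textrm{Norm}_{K/\Q}(\beta-x)=\prod_{\sigma}\bigl(\sigma(\beta)-x\bigr)=(-1)^{n}\,g_{\beta}(x)^{n/d}.
\]
Since $g_{\beta}$ is squarefree and nonconstant, $f_{\beta}(x)$ is a square in $\Q[x]$ if and only if $n/d$ is even, i.e.\ if and only if $[K:\Q(\beta)]$ is even.

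Consequently every bad $\beta$ lies in $\OO_{F}$ for the subfield $F:=\Q(\beta)\subsetneq K$, and this $F$ has even index $[K:F]$. Here the hypothesis that $K$ contains no subfield of index $2$ is exactly what I would exploit: for any proper subfield $F$ with $[K:F]$ even, the index $[K:F]$ divides $n$, is even, and is not $2$, hence is at least $4$, so $[F:\Q]\leq n/4$. Thus the set of bad $\beta$ in $B$ is contained in $\bigcup_{F}(\OO_{F}\cap B)$, the union taken over the (finitely many) subfields $F$ of $K$ with $[K:F]$ even, each of degree at most $n/4$.

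Next I would bound $\#(\OO_{F}\cap B)$ for each such $F$, writing $d=[F:\Q]$. The lattice $\OO_{F}$ sits inside the subspace $F\otimes\R\cong\R^{d}$ of $K\otimes\R=\R^{n}$, and the defining condition of $B$, namely that all archimedean absolute values are $\ll|\Disc(K)|^{1/n}$, restricts on elements of $F$ to the statement that each of the $d$ real/complex coordinates has absolute value $\ll|\Disc(K)|^{1/n}$; so $\OO_{F}\cap B$ lies in a box of side $O(|\Disc(K)|^{1/n})$ in $\R^{d}$. By the arithmetic--geometric mean inequality applied to $|\textrm{Norm}_{F/\Q}(\gamma)|\geq 1$, every nonzero $\gamma\in\OO_{F}$ has trace-norm length at least $\sqrt{[F:\Q]}\geq 1$, so distinct points of $\OO_{F}$ are at distance at least $1$; a routine volume/packing estimate then gives
\[
\#(\OO_{F}\cap B)\;\ll_{n}\;\bigl(1+|\Disc(K)|^{1/n}\bigr)^{d}\;\ll_{n}\;|\Disc(K)|^{d/n}\;\leq\;|\Disc(K)|^{1/4},
\]
using $d\leq n/4$ and $|\Disc(K)|\geq 1$. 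Since $K$ has only $O_{n}(1)$ subfields (their number is bounded by the number of subgroups of $\Gal(\widetilde{K}/\Q)$ for $\widetilde{K}$ the Galois closure of $K$), summing over all relevant $F$ yields the claimed bound $O(|\Disc(K)|^{1/4})$.

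The only load-bearing step is the first: recognizing that ``$f_\beta$ a square'' cuts out precisely the algebraic integers contained in even-index subfields. Everything after that is routine — the no-index-$2$ hypothesis is exactly what pushes the relevant subfield degrees down from the useless value $n/2$ (which would only reproduce the Brauer--Siegel-strength exponent $1/2$) to $n/4$, and the lattice-point count is uniform in $F$ because the $1$-separation of $\OO_{F}$ is independent of $\Disc(F)$. I would only need to take mild care that the implied constants in the box-versus-lattice estimate depend on $n$ alone, which is immediate from the packing argument above.
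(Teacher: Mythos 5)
Your proof is correct, and its load-bearing step --- showing that $f_\beta$ is a square in $\Q[x]$ precisely when $\beta$ lies in a proper subfield $F=\Q(\beta)$ of even index, which the no-index-$2$ hypothesis forces to satisfy $[K:F]\geq 4$ and hence $[F:\Q]\leq n/4$ --- is exactly the paper's argument. The one place you genuinely diverge is in counting $\OO_F\cap B$: the paper applies Theorem~\ref{secondres} to the subfield $F$ itself, together with the tower inequality $|\Disc(F)|\leq|\Disc(K)|^{1/[K:F]}$, to bound the count by $({\rm volume\ of\ box})/({\rm covolume\ of\ }\OO_F)\ll |\Disc(K)|^{[F:\Q]/n}/|\Disc(F)|^{1/2}$; you instead note that $|\mathrm{Norm}_{F/\Q}(\gamma)|\geq 1$ for nonzero $\gamma\in\OO_F$ makes $\OO_F$ a $1$-separated set, so a box of side $O(|\Disc(K)|^{1/n})$ in $\R^{[F:\Q]}$ contains $\ll |\Disc(K)|^{[F:\Q]/n}$ of its points by packing. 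Your intermediate bound is weaker by the harmless factor $|\Disc(F)|^{1/2}\geq 1$, but your route is more self-contained --- it needs neither the successive-minima theorem for $F$ nor any discriminant comparison between $F$ and $K$ --- and both versions land at $|\Disc(K)|^{1/[K:F]}\leq|\Disc(K)|^{1/4}$, with the $O_n(1)$ bound on the number of subfields finishing the proof identically.
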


\begin{proof}
The roots of $f_{\beta}$ are the Galois conjugates of $\beta$, and so $f_{\beta}$ is squarefree as a polynomial unless $\beta$ is contained in some proper subfield $F$ of $K$, in which case $f_\beta$ is a square if and only if $[K:F]>2$ is even. 
For any such subfield $F$, we have $|\Disc(F)|\leq |\Disc(K)|^{1/[K:F]}$, so we may again apply Theorem~\ref{secondres} to count the number of $\beta\in\OO_F\cap B$.  Indeed, $\OO_F$ is an $[F:\Q]$-dimensional lattice with covolume $\asymp |\Disc(F)|^{1/2}$ and largest successive minimum of length $\ll |\Disc(F)|^{1/[F:\Q]}\leq |\Disc(K)|^{1/n}$, and we wish to count the number of $\beta$ in the lattice $\OO_F$  in a region of diameter $\asymp |\Disc(K)|^{1/n}$.  The number of such $\beta$ is thus
$$\ll \frac{(|\Disc(K)|^{1/n})^{[F:\Q]}}{\;|\Disc(F)|^{1/2}} \leq |\Disc(K)|^{1/[K:F]} \leq |\Disc(K)|^{1/4}.$$
The number of possible such subfields $F$ of $K$ is at most a bounded constant depending only on $n$; summing up over all such subfields $F$ of $K$  yields the desired result. \end{proof}

We now use a result of Bombieri--Pila~\cite{BP}. If $f_{\beta}$ is not a square, then the number of integers $m\ll |\Disc(K)|^{1/n}$ with $f_{\beta}(m)$ equal to a square is the same as the number
of integral points on the (irreducible) curve $C_\beta:y^2=g_{\beta}(x)$ (where $g_\beta$ denotes the squarefree part of $f_\beta$) satisfying $|x|\ll |\Disc(K)|^{1/n}$ and $|y|\ll |\Disc(K)|^{1/2}$. By \cite[Theorem~5]{BP} (taking there $N=O(|\Disc(K)|^{1/2})$, the number of integral points on $C_\beta$ lying in this region of $(x,y)$ is at most
$O_{\epsilon}(|\Disc(K)|^{\frac1{2n}+\epsilon}$). 

Summing up over all $O(|\Disc(K)|^{\frac12-\frac1{n}}$)
representatives $\beta$ in $(\OO_K\cap B)/\Z$ then gives the desired bound of 
$O_{\epsilon}(|\Disc(K)|^{\frac12-\frac1{2n}+\epsilon}$) for the number of 2-torsion elements in the class group of~$K$. We have proven Theorem~\ref{main}.

\begin{remark}
{\em 
One may also use a sieve,  such as the Selberg sieve or the square sieve, to obtain a power saving on $O(|\Disc(K)|^{1/2})$, but such sieve methods do not seem to readily yield power savings that are as strong.
}
\end{remark}

In the case of degree $n=3$, we can in fact improve our bounds for the count of such $\beta$ that have square norm by relating this count to the number of integral points on elliptic curves. 
The same improved bound for the 2-torsion in the class groups of quartic fields then follows formally by the work in \cite{T}.
These improvements to Theorem~\ref{main} in the cubic and quartic cases, along with the proof of Theorem~\ref{main2}, are carried out in the next section.

\section{Specializing to the cubic and quartic cases, and integral points on elliptic curves}

In this section, we further improve our bounds on the 2-torsion in the class groups of cubic fields, by relating the problem, via the work of the previous section, to counting integral points on elliptic curves.  We then appeal to the bounds of Helfgott and Venkatesh~\cite{HV} on integral points and of Brumer and Kramer~\cite{BK} on ranks of elliptic curves.  As a consequence, we also then obtain nontrivial improvements to the best known bounds on 2-Selmer groups, ranks, and the number of integral points on elliptic curves, proving Theorem~\ref{main2}. Appealing to \cite{T} then yields the same bounds also on the 2-torsion in class groups of quartic fields.

\subsection{The cubic case of Theorem \ref{main}}

Let $K$ be a cubic field. Expanding the polynomial $f_{\beta}(m)$ from the previous section, it follows that $h_2(K)$ is bounded above by the number of integral points $(x,y)$ on 
$O(|\Disc(K)|^{1/6})$ elliptic curves $E_{\pm,A,B,C}$ of the form $\pm y^2= x^3+Ax^2+Bx+C$, where $|A|\leq |\Disc(K)|^{1/3}$, $B\leq |\Disc(K)|^{2/3}$, and $C\leq |\Disc(K)|$, that satisfy $|x|\ll |\Disc(K)|^{1/3}$ and $|y|\ll |\Disc(K)|^{1/2}$. 
It therefore suffices to bound the number of such integral points on each of these curves uniformly.

To accomplish this, we recall the following immediate consequence of a result
of Helfgott and Venkatesh \cite[Corollary~3.11]{HV},
who delicately apply sphere packing bounds 
to give a bound on the total number of integral points on an elliptic curve over $\Q$ in terms of its discriminant and rank:

\begin{theorem}\label{hvthm}
Let $E$ be an elliptic curve over $\Q$ in Weierstrass form with integral coefficients. 
Then the number of integer points on $E$ is
\begin{equation}\label{eqn:HV}
O_{\epsilon}\Big( \,|\Disc(E)|^\epsilon\, e^{(\beta + \epsilon) \rk(E)} \Big),
\end{equation}
where
\begin{equation}\label{def:beta}
\alpha = \frac{\sqrt{3}}{2}, \ \ \ \ \ \ 
\beta
= \frac{1 + \alpha}{2 \alpha} 
\log \frac{1 + \alpha}{2 \alpha}  - \frac{1 - \alpha}{2 \alpha} 
\log \frac{1 - \alpha}{2 \alpha} = .2782\dots
\end{equation}
\end{theorem}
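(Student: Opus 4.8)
The statement asserts a bound on integral points on an elliptic curve $E/\Q$ in Weierstrass form with integral coefficients, of the shape $O_\epsilon(|\Disc(E)|^\epsilon e^{(\beta+\epsilon)\rk(E)})$ with $\beta = .2782\ldots$ as in \eqref{def:beta}. Since the excerpt explicitly attributes this to \cite[Corollary~3.11]{HV} as an ``immediate consequence,'' the plan is not to reprove Helfgott--Venkatesh from scratch but to show how the stated bound follows from their framework. First I would recall the structure of the Helfgott--Venkatesh argument: one works with the Mordell--Weil lattice $E(\Q)/E(\Q)_{\rm tors}$ equipped with the canonical (Néron--Tate) height pairing $\hat h$, and one shows that integral points on $E$ cannot be too clustered in angle. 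Concretely, there is a repulsion principle: if $P,Q$ are distinct integral points with small angle between them in the Mordell--Weil lattice, then the height of $P-Q$ is forced to be small in a way that, combined with lower bounds on heights of integral points (via the theory of linear forms in logarithms, or more elementarily via the fact that integral points have height bounded below by a constant times $\log|\Disc(E)|$ up to the $|\Disc(E)|^\epsilon$ slack), controls how many integral points can lie in a narrow cone.

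The key quantitative step is a sphere-packing / cone-covering estimate. One covers the unit sphere $S^{r-1}$ in $E(\Q)\otimes\R \cong \R^r$ (where $r = \rk(E)$) by cones of a fixed half-angle $\theta$, and argues that each such cone contains only $O_\epsilon(|\Disc(E)|^\epsilon)$ integral points. The number of cones needed is the covering number of $S^{r-1}$ by spherical caps of angular radius $\theta$, which grows like $(\text{const}/\theta)^{r-1}$ up to polynomial factors in $r$; since $r = O(\log|\Disc(E)|)$ by Brumer--Kramer, polynomial factors in $r$ get absorbed into $|\Disc(E)|^\epsilon$. Optimizing the choice of $\theta$ — balancing the per-cone count against the number of cones — is exactly where the constant $\alpha = \sqrt3/2$ enters: $\alpha$ is (essentially) $\cos\theta$ for the optimal $\theta = \pi/6$, coming from the geometry of the densest configurations compatible with the repulsion inequality. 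Substituting this optimal $\theta$ into the covering-number exponent yields the entropy-type expression $\beta = \frac{1+\alpha}{2\alpha}\log\frac{1+\alpha}{2\alpha} - \frac{1-\alpha}{2\alpha}\log\frac{1-\alpha}{2\alpha}$, so that the total count is $O_\epsilon(|\Disc(E)|^\epsilon e^{(\beta+\epsilon)r})$, which is the claim.

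The main obstacle — or rather, the main technical content one must invoke — is the uniform per-cone bound: showing that within a cone of the optimal half-angle, the number of integral points is at most $O_\epsilon(|\Disc(E)|^\epsilon)$, uniformly over all $E$. This is where Helfgott and Venkatesh combine the gap principle (consecutive integral points along a near-ray have geometrically growing heights) with the lower bound on the smallest height of a non-torsion integral point. The gap principle gives that the number of integral points with height in a dyadic range $[H, 2H]$ lying in a fixed narrow cone is $O(1)$, and summing over the $O(\log(H_{\max}/H_{\min})) = O_\epsilon(\log|\Disc(E)|)$ relevant dyadic ranges (since heights of integral points range between $\gg 1$ — or $\gg |\Disc(E)|^{-\epsilon}$ — and $\ll |\Disc(E)|^{O(1)}$) produces a bound that is polylogarithmic in $|\Disc(E)|$, hence $O_\epsilon(|\Disc(E)|^\epsilon)$. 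Since all of this is carried out in \cite{HV}, for the purposes of this paper I would simply cite \cite[Corollary~3.11]{HV} and note that it gives precisely \eqref{eqn:HV} with $\beta$ as in \eqref{def:beta}; the role of Theorem~\ref{hvthm} here is as a black box to be fed the $O(|\Disc(K)|^{1/6})$ elliptic curves produced in Section~\ref{2torsec} and the preceding section.
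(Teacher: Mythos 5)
Your proposal matches the paper exactly: the paper does not reprove this result but simply recalls it as an immediate consequence of Helfgott--Venkatesh \cite[Corollary~3.11]{HV} (itself a specialization of their Theorem~3.8 with the sphere-packing parameter $t=0$), which is precisely what you do. Your accompanying sketch of the Helfgott--Venkatesh machinery is a reasonable gloss but is not needed, since the theorem is used here purely as a black box.
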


Helfgott and Venkatesh actually prove a more general theorem~\cite[Theorem~3.8]{HV}, which allows for the choice of a parameter $t \in [0, 1]$ (to interpolate between sphere packing bounds and Bombieri--Pila-style bounds) and also allows as input an upper bound on the canonical height of the integer points to be counted.  
We have checked that the choice $t = 0$ (i.e., a pure application of sphere packing) in their theorem optimizes our ensuing bounds; in addition, imposing our restrictions from the previous section on the canonical height of the integral points we wish to count on $E_{\pm,A,B,C}$ (when $t=0$) 
does not improve the exponent of $|\Disc(K)|$ in our ensuing bounds.  Therefore, 
Theorem~\ref{hvthm}
represents the special case of \cite[Theorem~3.8]{HV} that yields the optimal bounds in our application. 

We also apply the following result of Brumer and Kramer (see~\cite[Proposition 7.1 and subsequent remark]{BK}) relating Selmer groups (and thus ranks) to class groups.
\begin{theorem} \label{thm:BK}
Suppose that $E$ is an elliptic curve defined by a Weierstrass equation $y^2 = f(x)$ over $\Q$.
Let $K$ be the \'etale cubic $\Q$-algebra $\Q[x]/(f(x))$.
Then
\begin{equation}\label{eqn:BK}
\rank(E)\leq \rk_2(\Sel_2(E)) \leq \rk_2(\Cl(K)[2]) + 2 \omega(|\Disc(E)|) + 2,
\end{equation}
 where as usual $\omega(n)$ is the number of distinct prime divisors of $n$.
\end{theorem}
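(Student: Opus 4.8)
The proof is by explicit $2$-descent; here is the plan. The first inequality, $\rank(E)\le \rk_2(\Sel_2(E))$, is formal: the descent exact sequence $0\to E(\Q)/2E(\Q)\to \Sel_2(E)\to \SH(E)[2]\to 0$ gives $\rk_2(\Sel_2(E))\ge \rk_2\bigl(E(\Q)/2E(\Q)\bigr)=\rank(E)+\rk_2 E(\Q)[2]\ge \rank(E)$. So the real content is the bound on $\rk_2\Sel_2(E)$, which I would obtain by realizing $\Sel_2(E)$ inside an explicit finite subgroup of $K^\times/(K^\times)^2$ attached to the $2$-torsion of a class group and to the ramification of $E$.

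First I would set up the descent map. Writing $\theta$ for the image of $x$ in $K$, the group scheme $E[2]$ sits in a short exact sequence of $\Gal(\overline{\Q}/\Q)$-modules $1\to E[2]\to \res_{K/\Q}\mu_2\xrightarrow{N}\mu_2\to 1$, where $\res_{K/\Q}\mu_2$ is the permutation module on the roots of $f$ and $N$ is the norm. Taking cohomology and invoking Shapiro's lemma ($H^1(\Q,\res_{K/\Q}\mu_2)\cong K^\times/(K^\times)^2$) produces an injection $H^1(\Q,E[2])\hookrightarrow K^\times/(K^\times)^2$ landing in the classes of trivial norm in $\Q^\times/(\Q^\times)^2$; on points this is $(x_0,y_0)\mapsto x_0-\theta$. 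For a prime $p\nmid 2\Disc(E)$ the curve has good reduction at the odd prime $p$, so the image of $E(\Q_p)/2E(\Q_p)$ in $H^1(\Q_p,E[2])$ consists of unramified classes; translated through the identification above, every $\delta$ in the image of $\Sel_2(E)$ then has even valuation at each prime of $K$ over such $p$. Hence, setting $S=\{\infty\}\cup\{p:p\mid 2\Disc(E)\}$ and $K(S,2):=\{\delta\in K^\times/(K^\times)^2:\mathrm{ord}_{\mathfrak p}(\delta)\equiv 0 \pmod 2\ \text{for all }\mathfrak p\nmid S\}$, we get $\Sel_2(E)\hookrightarrow K(S,2)$, and in fact into the subgroup meeting, at each $v\in S$, the exact local image $W_v$ of $E(\Q_v)/2E(\Q_v)$. (When $\Z[\theta]$ is not maximal the relevant primes divide $\disc(f)\mid\Disc(E)$ and so already lie in $S$, causing no additional trouble.)

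Then comes the class-group input. The Kummer sequence for the $S$-integers of $K$ gives $1\to \OO_{K,S}^\times/(\OO_{K,S}^\times)^2\to K(S,2)\to \Cl_S(K)[2]\to 1$, and since $\Cl_S(K)$ is a quotient of $\Cl(K)$ we have $\rk_2\Cl_S(K)[2]\le\rk_2\Cl(K)[2]$; for the \'etale cubic algebra $K$ one computes $\rk_2\bigl(\OO_{K,S}^\times/(\OO_{K,S}^\times)^2\bigr)=r_1+r_2+|S_f|$, where $S_f$ is the set of finite primes of $K$ above $S$ and $r_1+r_2\le 3$. This yields the crude estimate $\rk_2 K(S,2)\le \rk_2\Cl(K)[2]+|S_f|+3$, which is too weak because $|S_f|$ can be as large as $3\,\omega(2\Disc(E))$. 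The final step — which I expect to be the main obstacle — is to recover the true constant by keeping the local conditions at $S$ that were discarded above: at a bad odd prime $p$ with $g_p\le 3$ primes of $K$ over it, local Tate duality forces the image $W_p$ of $E(\Q_p)/2E(\Q_p)$ to be a maximal isotropic subspace of $H^1(\Q_p,E[2])$, of dimension only $g_p-1\le 2$ inside the $2g_p$-dimensional group $K_p^\times/(K_p^\times)^2$, so intersecting $K(S,2)$ with these conditions replaces the contribution $g_p$ of $p$ by at most $g_p-1\le 2$, producing the term $2\,\omega(|\Disc(E)|)$; here one uses that $\Disc(E)$ is divisible by $2$ for a curve $y^2=f(x)$, so that $\omega(2\Disc(E))=\omega(|\Disc(E)|)$. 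A parallel (but more delicate) comparison of $\dim W_v$ with $\dim K_v^\times/(K_v^\times)^2$ at $v=2$ and at the archimedean place, together with the global norm-triviality condition and the bound $r_1+r_2\le 3$, absorbs what remains down to the additive constant $2$, giving $\rk_2\Sel_2(E)\le\rk_2\Cl(K)[2]+2\,\omega(|\Disc(E)|)+2$. Carrying out this local bookkeeping carefully is where all the real work lies; the rest is standard Galois cohomology.
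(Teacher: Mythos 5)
First, note that the paper does not prove this theorem at all: it is quoted from Brumer--Kramer \cite{BK} (Proposition 7.1 and the subsequent remark), so there is no internal proof to compare against. Your descent strategy is nevertheless the correct one, and is essentially the route Brumer and Kramer themselves take: the identification of $E[2]$ with the norm-one subscheme of $\res_{K/\Q}\mu_2$, the resulting injection $H^1(\Q,E[2])\hookrightarrow K^\times/(K^\times)^2$ sending $(x_0,y_0)$ to $x_0-\theta$, the unramifiedness of the local Kummer image at odd primes of good reduction, the exact sequence bounding $K(S,2)$ by $S$-units and the $S$-class group, and the inequality $\rk_2\Cl_S(K)[2]\le\rk_2\Cl(K)[2]$ (valid since $\Cl(K)$ is finite) are all correct, as is the count $\dim W_p=g_p-1$ at odd bad primes.

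The gap is the final step, and it is a genuine one rather than a matter of exposition: the bookkeeping you defer is precisely what separates the stated constant from a strictly weaker one, and your own accounting does not close. Concretely, the naive bound you set up is $\rk_2\Sel_2(E)\le\rk_2\Cl(K)[2]+(r_1+r_2)+\sum_{p\in S}\rk_2\bigl(\mathrm{val}_p(\Sel_2(E))\bigr)$. At odd bad $p$ the local image has dimension $g_p-1\le 2$ as you say, but at $p=2$ one has $\dim_{\F_2}E(\Q_2)/2E(\Q_2)=\dim_{\F_2}E(\Q_2)[2]+1=g_2\le 3$, not $g_2-1$; together with $r_1+r_2\le 3$ this yields only $\rk_2\Cl(K)[2]+2\omega(|\Disc(E)|)+4$ (and that already uses $2\mid\Disc(E)$). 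Recovering the additive constant $2$ requires playing the global norm-triviality condition, the archimedean condition ($\dim W_\infty\le 1$ while the target there has dimension $r_1\le 3$), and the structure of $W_2$ simultaneously against the unit contribution; this interaction is the actual content of Brumer--Kramer's computation and none of it is supplied. As written, the proposal is a sound plan whose decisive step is asserted rather than proved, so it does not yet establish the inequality with the constant $2\omega(|\Disc(E)|)+2$.
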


Thus, we have shown that the 2-torsion in the class group of a cubic field $K$ is bounded in terms of the number of integral points on a collection of elliptic curves $E_{\pm,A,B,C}$, each having a 2-torsion point defined over $K$; meanwhile, Theorems~\ref{hvthm} and~\ref{thm:BK} together show that the number of integral points on an elliptic curve with a 2-torsion point defined over $K$ is in fact bounded in terms of the 2-torsion in the class group of $K$!  This results in a feedback loop, where each improved bound on the 2-torsion in the class groups of cubic fields results in an improved bound on the ranks and number of integral points on elliptic curves over $\Q$, and vice versa. 

To solve for the point of convergence, we substitute the bound (\ref{eqn:BK}) of Theorem~\ref{thm:BK}
into Equation~\eqref{eqn:HV} of Theorem \ref{hvthm}, and sum over our $|\Disc(K)|^{1/6}$ elliptic curves $E_{\pm,A,B,C}$ each of which has absolute discriminant $\ll |\Disc(K)|^2$ by the bounds on $A,$ $B,$ and $C$.  We thereby obtain 
$$h_2(K)\ll_\epsilon |\Disc(K)|^{1/6+\epsilon}e^{(\beta + \epsilon) \log_2h_2(K)}$$
with $h_2(K)$ now 
occuring on both sides of the inequality. 
We conclude that
\begin{equation}\label{cl2bound}
h_2(K)\ll_{\epsilon}|\Disc(K)|^{{\textstyle{\frac{1}{6\left(1-\frac{\beta}{\log 2}\right)}}}+\epsilon}= O_\epsilon(|\Disc(K)|^{.2784\ldots+\epsilon}),
\end{equation}
proving Theorem~\ref{main} for cubic fields.

\subsection{Proof of Theorem \ref{main2} on  bounds for $2$-Selmer groups, ranks, and integral points on elliptic curves}
Parts~(a) and (b) of Theorem~\ref{main2}, on 2-Selmer groups and ranks of elliptic curves, are now immediate from the inequality \eqref{eqn:BK} of Brumer and Kramer and the bound (\ref{cl2bound}) on the size of the 2-torsion in class groups of cubic fields. 

Meanwhile, Part (c) of Theorem~\ref{main2}, on the total number of integral points on an elliptic curve $E$ over $\Q$, follows by substituting~the result for Part~(b) into~Equation~\eqref{eqn:HV} of Theorem~\ref{hvthm}. We remark that the previously best known bound for the total number of integral points on $E$, of $O_{\epsilon}(|\Disc(E)|^{.2007\ldots + \epsilon})$,
is~Corollary~3.12 of \cite{HV}, proved in the same way but using the trivial bound
$h_2(K) \leq h(K)$.

\subsection{The quartic case of Theorem \ref{main}}

To handle the case of quartic fields, we recall \cite[Lemma 6.1]{T}:

\begin{lemma}\label{quarcub}
Let $K$ be a quartic field of discriminant $\Disc(K)$ with a cubic resolvent field $F$. Then the numbers $h_2(K)$ and $h_2(F)$  are the same up to a multiplicative constant of order $|\Disc(K)|^{o(1)}$.
\end{lemma}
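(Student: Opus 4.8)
The plan is to show that both $h_2(K)$ and $h_2(F)$ are, up to a factor of $|\Disc(K)|^{o(1)}$, equal to the order of one and the same Selmer-type group attached to the common cubic resolvent. Since $F$ is a cubic \emph{field}, the Galois group $G$ of the normal closure of $K$ is $A_4$ or $S_4$: the groups $C_4,V_4,D_4$ are excluded because their resolvent cubics are reducible. In either remaining case $G$ surjects, with kernel $G\cap V_4$, onto the Galois group of the normal closure of $F$ (which is $S_3$ for $S_4$, and $C_3$ for $A_4$), the quotient map being exactly the permutation action of $G$ on the three conjugates of a generator of $F$; thus $K$ and $F$ literally share the cubic resolvent, and it is this common extension that will govern the comparison.

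I would first translate into \'etale cohomology. Kummer theory gives an exact sequence $0\to\OO_K^\times/(\OO_K^\times)^2\to K(S,2)\to\Cl(\OO_K)[2]\to 0$, where $K(S,2)=\{x\in K^\times/(K^\times)^2:\ \mathrm{ord}_v(x)\text{ is even for all }v\}$, and since $|\OO_K^\times/(\OO_K^\times)^2|$ is bounded in terms of $n$ alone we get $h_2(K)\asymp|K(S,2)|$. By Shapiro's lemma $K(S,2)$ is the group of classes in $H^1(\Q,P_K)$ unramified at every finite prime, where $P_K$ is the permutation $\F_2[G]$-module on the four embeddings of $K$; the analogous statement holds for $h_2(F)$ with the $3$-dimensional permutation module $P_F$ on the conjugates of the resolvent. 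It therefore suffices to compare $|H^1_{\mathrm{ur}}(\Q,P_K)|$ with $|H^1_{\mathrm{ur}}(\Q,P_F)|$ (here $\mathrm{ur}$ means unramified at all finite primes).

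Next I would pass to $G$-composition factors. Let $D$ be the all-ones line and $Z$ the sum-zero hyperplane inside $P_K$; since $4\equiv 0$ one has $D\subset Z$, and a direct check shows $V_4$ acts trivially on $Z/D$, so $Z/D$ is the $2$-dimensional module $\mathbf{st}$ inflated from the Galois group of the resolvent (the standard module of $S_3$, resp.\ the irreducible $\F_4$-module of $C_3$). Thus $P_K$ has composition factors $\mathbf 1,\mathbf{st},\mathbf 1$, whereas $P_F$ splits as $\mathbf 1\oplus\mathbf{st}$. Feeding these filtrations into the long exact sequences for $H^\bullet_{\mathrm{ur}}$, the trivial composition factors contribute only $H^0(\Q,\mathbf 1)$, $H^1_{\mathrm{ur}}(\Q,\mu_2)$ and $H^0(\Q,\mathbf{st})$, all of order $O(1)$ (the first is $\F_2$; the second is trivial, as $\Q$ has no nontrivial everywhere-unramified quadratic extension; the third vanishes). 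The only genuine loss is the failure of the ``unramified'' subspace to be exact at ramified primes, which at each prime $p$ is bounded by $\#H^1(\Q_p,\mathbf{st})$; by the local Euler characteristic formula this is $O(1)$ uniformly in $p$, so the total loss is at most exponential in the number of primes ramifying in $K$, hence $|\Disc(K)|^{o(1)}$. Running this for $K$ and for $F$ gives $h_2(K)=|\Disc(K)|^{o(1)}\,|H^1_{L_K}(\Q,\mathbf{st})|$ and $h_2(F)=|\Disc(K)|^{o(1)}\,|H^1_{L_F}(\Q,\mathbf{st})|$ (using $|\Disc(F)|\le|\Disc(K)|$), where $L_K,L_F$ are the local-condition systems on $\mathbf{st}$ inherited from $P_K$ and $P_F$.

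The main point still requiring care is that $L_K$, $L_F$, and the standard everywhere-unramified condition on $\mathbf{st}$ differ only at primes ramifying in the resolvent field $F$; at every other prime all three are the literal unramified condition. At each of the (finitely many) ramified primes the two local subspaces sit inside $H^1(\Q_p,\mathbf{st})$, a group of order $O(1)$, so they have comparable size, and multiplying over these primes yields $|H^1_{L_K}(\Q,\mathbf{st})|=|\Disc(K)|^{o(1)}\,|H^1_{L_F}(\Q,\mathbf{st})|$, completing the argument. Some routine bookkeeping remains: narrow versus wide class groups and the archimedean places (a factor $2^{O(1)}$), and the uniform treatment of the $A_4$ and $S_4$ cases. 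An alternative route, closer to the prehomogeneous-vector-space methods used elsewhere in this paper, would identify both $h_2(K)$ and $h_2(F)$ with a sub-quotient of the same group of pairs $(I,\delta)$ with $I^2=(\delta)$ in the cubic resolvent, via Bhargava's parametrization of quartic rings with prescribed cubic resolvent by pairs of ternary quadratic forms; there the delicate step would again be controlling, up to $|\Disc(K)|^{o(1)}$, indices of non-maximal orders and unit contributions.
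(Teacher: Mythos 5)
The paper does not prove this lemma at all: it is simply recalled as \cite[Lemma 6.1]{T}, so there is no in-paper argument to compare against, and a self-contained proof like yours is genuinely additional content. Your sketch is correct in outline and is essentially the standard route. The structural heart of it is right: if $F$ is a cubic \emph{field} then $G\in\{A_4,S_4\}$, and the mod-$2$ permutation modules on the four embeddings of $K$ and the three embeddings of $F$ have the same composition factors up to trivial modules, the common nontrivial factor $\mathbf{st}$ being inflated from $G/(G\cap V_4)$; your verification that $V_4$ acts trivially on $Z/D$ (via $a+b=c+d$ on the sum-zero hyperplane) is correct, as is the splitting $P_F\cong\mathbf 1\oplus\mathbf{st}$ since $3$ is odd. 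The error accounting is also of the right shape: trivial factors and the places over $2$ and $\infty$ contribute $O(1)$, and discrepancies of local conditions live at the $\omega(\Disc(K))=o(\log|\Disc(K)|)$ ramified primes, each costing $O(1)$, for a total of $|\Disc(K)|^{o(1)}$. Two deferred points would need to be made explicit in a full write-up, though neither is a gap: (i) the lower-bound direction of the d\'evissage requires the cokernel of $H^1(\Q,Z)\to H^1(\Q,\mathbf{st})$ on Selmer subgroups to be controlled, and the ambient obstruction group $H^2(\Q,\mu_2)=\mathrm{Br}(\Q)[2]$ is infinite; one must note that the obstruction class is locally trivial outside the ramified set, hence lies in a subgroup of order $2^{O(\omega(\Disc(K)))}$ by the fundamental exact sequence for the Brauer group; (ii) at places over $2$ the condition ``$\mathrm{ord}_v(x)$ even'' is not literally the unramified condition, but the discrepancy is again $O(1)$. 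By comparison, the proof in \cite{T} (in the spirit of Heilbronn \cite{Hcf}) can be phrased more elementarily by matching index-$2$ subgroups of $\Cl(K)$ and of $\Cl(F)$ directly through class field theory and the Galois correspondence inside the $S_4$- or $A_4$-closure; your cohomological formulation buys a cleaner uniform treatment of the $A_4$ and $S_4$ cases and of the \'etale-algebra generalization noted in the remark following the lemma, at the cost of the Selmer-group bookkeeping above.
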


\begin{remark}
{\em 
The above lemma in fact holds for any quartic field---even if its cubic resolvent is not a field but merely an \'etale cubic algebra---with the identical proof.
}
\end{remark}

Since the discriminant of $F$ is at most the discriminant of $K$, we conclude immediately from Lemma~\ref{quarcub} that 
$$h_2(K) \ll_{\epsilon}|\Disc(K)|^{{\textstyle{\frac{1}{6\left(1-\frac{\beta(0)}{\log 2}\right)}}}+\epsilon}= O_\epsilon(|\Disc(K)|^{.2784\ldots+\epsilon}),$$
giving the same improvement of the power saving in Theorem~\ref{main} for quartic fields as well.

\section{The number of $A_4$-quartic fields of  bounded discriminant}\label{a4sec}

We can also use our bounds on the 2-torsion in class groups of cubic fields to improve on the best known bounds, due to Baily~\cite{Baily} and Wong~\cite{Wong}, on the number of quartic fields (up to isomorphism) of bounded discriminant having associated Galois group the alternating group $A_4$.  

Let $\mathcal{F}_3$ be any set of isomorphism classes of cubic fields, and let $\mathcal{F}_4$ be the subset of isomorphism classes of
$A_4$- and/or $S_4$-quartic fields whose cubic resolvents are in $\mathcal{F}_3$.
Write $N(\mathcal{F}_4, X)$ for the number of quartic fields in $\mathcal{F}_4$ with absolute discriminant
less than $X$.

In~\cite[Lemma 10 and Theorem 2]{Baily}, Baily proved the estimate
\begin{equation}\label{beq}
N(\mathcal{F}_4, X) \ll \sum_{{\scriptstyle{K \in \mathcal{F}_3}\atop\scriptstyle{|\Disc(K)|<X}}} h_2(K) \log^2(|\Disc(K)|) \left( \frac{X}{|\Disc(K)|} \right)^{1/2}.
\end{equation}
Baily only considered the cases where $\mathcal{F}_3$ is the set of all noncyclic cubic fields (in his Theorem 2) and
all cyclic fields (his Theorem 4), but his arguments evidently give the more general result \eqref{beq} without any change.

For a transitive subgroup $G\subset S_n$, let $N_n(G,X)$ denote the number of isomorphism classes of number fields of degree $n$ having Galois closure with Galois group $G$ and absolute discriminant less than $X$. Incorporating the bound
\begin{equation}
h_2(K) \leq h(K) \ll |\Disc(K)|^{1/2} \log^2(|\Disc(K)|)
\end{equation}
and the asymptotics $N_3(C_3, X) \sim C_1 X^{1/2}$ and $N_3(S_3, X) \sim C_2 X$, due to Cohn \cite{Cohn}
and Davenport and Heilbronn \cite{DH}, respectively, Baily concluded that
\begin{equation}
N_4(A_4, X) \ll X \log^4 X, \ \ \ N_4(S_4,X) \ll X^{3/2} \log^4 X.
\end{equation}

These estimates were improved by Wong~\cite{Wong0, Wong}, who showed {\it on average} that
$h_2(K)=O_\epsilon(|\Disc(K)|^{1/3+\epsilon})$, yielding
the improved bound $N_4(A_4,X)=O_\epsilon(X^{5/6+\epsilon})$.  

Our Theorem~\ref{main} gives the improved bound $h_2(K)=O_\epsilon(|\Disc(K)|^{.2784\ldots+\epsilon})$ for individual cubic fields $K$ (regardless of whether they are cyclic or noncyclic).
Substituting this bound into~\eqref{beq} and carrying out the partial summation as in \cite{Baily}, we immediately obtain that
\begin{equation}
N_4(A_4,X)=O_\epsilon(X^{.7784\ldots+\epsilon}),
\end{equation}
proving Theorem~\ref{main3}.

\medskip
\begin{remark}\label{quadresolve}
{\em In fact, for any fixed quadratic field $F$, let $N_n(G,F,X)$ denote the number of isomorphism classes of number of fields of degree $n$ having Galois closure with Galois group $G$, quadratic resolvent field isomorphic to $F$, and absolute discriminant less than $X$.  
(For example, if $G=S_4$, then $N_4(S_4,F,X)$ counts the number of quartic fields $K$ of absolute discriminant less than $X$ such that $F$ is the unique quadratic subfield of the Galois closure of $K$.) We obtain the bound
\begin{equation}\label{eqn:S4K}
N_4(S_4,F,X)=O_{\epsilon, F}(X^{.7784\ldots+\epsilon}).
\end{equation}
To prove this, in place of Cohn's result we use the asymptotic
\begin{equation}\label{eqn:BSCM}
N_3(S_3,F, X) \sim C_F X^{1/2},
\end{equation}
proved independently by the first author and Shnidman \cite[Theorem 7]{BShnid} 
and Cohen and Morra \cite[Theorem 1.1(2), Corollary 7.6]{CM}, and the result follows immediately.
We remark that $C_F$ is $O\big(|\Disc(F)|^{-1/2} \cdot L(1, \chi(F'))\big)$, where $F'$
is the `mirror field' of $F$, so that the implied constant in~\eqref{eqn:S4K} is decreasing 
in $|\Disc(F)|$. However, the rate of convergence in \eqref{eqn:BSCM} depends on $F$, so that 
we do not obtain uniformity in the quadratic field $F$ (at least not without further effort).}
\end{remark}

\section{Bounding 2-torsion in class groups of function fields}\label{ffsec}

There is a well-known analogy between number fields and function fields of curves, which we briefly review. Let $k$ be a finite field. For the purposes of this paper, when we say $C$ is a curve over $k$, we mean that $C$ is a smooth, projective, geometrically irreducible curve over $k$.  Let $g$ be the genus of~$C$. Recall that the gonality of $C$
is defined to be the minimal degree of a non-constant map $\phi:C\to \P^1$ defined over $k$. Curves of genus $g$ and gonality~$n$ are analogous to number fields of degree $n$ over $\Q$ with discriminant of size roughly $|k|^{2g}$. In particular, it is noteworthy that when we speak of all curves of genus $g$, it is analogous to speaking of all number fields of discriminant roughly of size $|k|^{2g}$, \emph{without fixing their degree}! 
We now prove our main theorem:

\begin{theorem}\label{genus}
Let $C$ be a curve of genus $g$ over a finite field $k$. Then $$|\pic^0(C)(k)[2]|\leq \frac{|k|^{g+1}-1}{|k|-1}.$$
Moreover, if $C$ admits a degree $n$ map to $\P^1$ over $k$, then we have $$|\pic^0(C)(k)[2]|\ll_n |k|^{(1-\frac{1}{n})g}.$$
\end{theorem}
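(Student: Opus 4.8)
The plan is to mirror the number-field argument of Sections 2--4, replacing every geometry-of-numbers input by the Riemann--Roch theorem. Throughout, fix a degree-$n$ map $\phi\colon C\to\P^1$ over $k$ (for the second bound), and let $D_0$ be the pullback $\phi^*(\infty)$, an effective divisor of degree $n$.

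\medskip\noindent\textbf{Step 1: the trivial bound.} A $2$-torsion class in $\pic^0(C)(k)$ is represented by a divisor class $[E]$ with $2[E]=0$. Pick any $k$-rational point or, more robustly, a fixed divisor to twist by, so that we may assume $\deg E = g-1$ or some fixed value in the range $[0,g]$; then $[E]$ is represented by an effective divisor $E\geq 0$ once its degree is at least $g$, and for $2$-torsion classes one can arrange $E$ effective of degree at most $g$ by Riemann--Roch. The number of effective divisors of degree $d$ defined over $k$ is the coefficient of $t^d$ in the zeta function $Z_C(t)=\tfrac{P(t)}{(1-t)(1-|k|t)}$; crudely, the number of effective $k$-divisors of degree $\leq g$ is at most $\sum_{d\le g}\#\{\text{effective deg-}d\text{ divisors}\}$, and a clean way to package the answer is to note that each $2$-torsion class has a representative in $|D|$ for a suitable divisor $D$ of degree $g$, and $\dim_k H^0(C,\OO(D))\le g+1$, whence the number of such classes is at most $\#\P(H^0(C,\OO(D)))=\frac{|k|^{g+1}-1}{|k|-1}$. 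This gives the first displayed bound. The main point is that distinct $2$-torsion classes give distinct linear systems, so one injects the $2$-torsion into the points of a single projective space of dimension $\le g$.

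\medskip\noindent\textbf{Step 2: the function-field analogue of Theorem~\ref{firstres}.} For a $2$-torsion class $[E]$, the class $2[E]$ is trivial, so $2E$ is linearly equivalent to $0$; the role of ``$I^2=(\beta)$ with $|\beta|_v$ small'' is played by the statement that $2E$ is the divisor of a rational function $\beta$ on $C$, and the archimedean smallness is replaced by a pole-order bound. Concretely, using Riemann--Roch on the degree-$n$ map $\phi$, one shows that every $2$-torsion class is represented by a divisor $E$ supported in fibers of $\phi$, with $E\le \phi^*(T)$ for an effective divisor $T$ on $\P^1$ of degree $O(g/n)$ — this is exactly the analogue of choosing $\beta$ in a box of side $|\Disc|^{1/n}$, since $\deg(\text{box in }x)\leftrightarrow \deg T\approx g/n$ and $\deg(\text{box in }y)\leftrightarrow \frac12\deg(2E)\approx g$. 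Writing $\beta\in k(\P^1)=k(x)$ for the function with $\div_C(2E)=\phi^*\div_{\P^1}(\beta)+(\text{correction})$, the condition that $2E$ be an honest square of a $\P^1$-divisor pulled back becomes, as in Section~4, the condition that a certain polynomial $f_\beta(x)$ (the norm form $N_{k(C)/k(x)}(\cdot-x\text{-coordinate})$, or rather the defining polynomial of $C$ over $\P^1$ evaluated appropriately) be a square in $k(x)[y]$, i.e.\ that $(x,y)$ lie on an auxiliary curve $y^2=g_\beta(x)$ of controlled degree.

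\medskip\noindent\textbf{Step 3: apply Sedunova's theorem.} Once the $2$-torsion classes are parametrized by $k$-points $(x,y)$ with $\deg x\ll g/n$ and $\deg y\ll g$ lying on a bounded-degree affine curve over $k$ — equivalently, integral points of bounded height on that curve — we invoke the function-field Bombieri--Pila bound of Sedunova~\cite{Sedunova}: the number of such points on an irreducible curve is $\ll_n |k|^{\delta}$ where $\delta$ is roughly $(1/n)$ times the height bound, giving $|k|^{g/n}$ per curve. The degenerate case, where $f_\beta$ is a square (analogue of $\beta$ lying in a proper subfield), corresponds to $C$ factoring its map to $\P^1$ through an intermediate cover, and is handled exactly as the subfield case in Section~4 by induction on the degree of the subcover, again contributing at most $|k|^{(1-1/n)g}$. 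Summing $|k|^{g/n}$ over the $O(|k|^{(1-2/n)g})$ choices of the box parameter $T$ (there are $\approx |k|^{\deg T}=|k|^{g\cdot(n-2)/n}$ effective divisors $T$ of degree $\approx (1-2/n)\cdot$ something, matching the count $|\Disc|^{1/2-1/n}$ in Section~4) yields the claimed $|k|^{(1-1/n)g}$.

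\medskip\noindent\textbf{Main obstacle.} The delicate point is Step~2: setting up the Riemann--Roch bookkeeping so that the exponents come out to precisely $g/n$ and $g$ (the analogues of $|\Disc|^{1/2n}$ and $|\Disc|^{1/2}$), and ensuring that the ``number of boxes'' times ``points per box'' multiplies to $|k|^{(1-1/n)g}$ rather than something weaker. In the number-field case this balancing is automatic from $\frac1{2n}+(\frac12-\frac1n)=\frac12-\frac1{2n}$; here one must verify the corresponding identity $\frac1n\cdot g + (1-\frac2n)g$-worth-of-boxes is organized correctly, and in particular that the reduction divisor $E$ can genuinely be taken inside $\phi^*(T)$ with $\deg T=O(g/n)$ uniformly — this requires a Riemann--Roch / clearing-denominators argument on $\P^1$ in place of Minkowski's theorem, and is where one must be careful about the genus of $C$ versus the degree of the auxiliary curve to which Sedunova's result is applied.
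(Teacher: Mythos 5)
Your overall strategy coincides with the paper's: Riemann--Roch replaces Minkowski's theorem, and Sedunova's function-field Bombieri--Pila replaces the original; your Steps 2--3 are essentially the paper's second claim, which the paper itself only sketches by saying ``proceed as in the proof of Theorem~\ref{main}, using Sedunova,'' and your exponent bookkeeping ($|k|^{g/n}$ points per curve times $|k|^{(1-2/n)g}$ curves) is consistent with the translation $|\Disc(K)|\leftrightarrow |k|^{2g}$.

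However, Step~1 --- which carries the actual content of the first displayed bound --- has a genuine gap. You write that each $2$-torsion class has an effective representative of degree $g$ (true, after twisting by a fixed degree-$g$ divisor $E$: the class $[E]-[D]$ has a section, so $E-D\sim D_+$ effective), and then conclude that because ``distinct $2$-torsion classes give distinct linear systems, one injects the $2$-torsion into the points of a single projective space.'' This is a non sequitur: the linear systems $|E-D|$ for varying $2$-torsion classes $[D]$ belong to \emph{different} line bundles, so their disjointness only injects the $2$-torsion into the set of all effective degree-$g$ divisors, which is far larger than $\frac{|k|^{g+1}-1}{|k|-1}$ (its size involves the class number). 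The missing idea is to pass to squares: having written $D=E-D_+$ with $D_+$ effective, the relation $2D\sim 0$ gives $2D_+-2E=\div(f)$ for some $f$ in the \emph{single fixed} space $H^0(C,2E)$, which has dimension exactly $g+1$ since $\deg(2E)=2g>2g-2$ kills $h^1$. One then gets a surjection from $\P H^0(C,2E)^{\rm sq}=\{f\in H^0(C,2E)\setminus\{0\}: 2\mid \div(f)\}/k^\times$ onto $\pic^0(C)(k)[2]$, and the bound $\frac{|k|^{g+1}-1}{|k|-1}=|\P H^0(C,2E)|$ follows. This is exactly the function-field analogue of Theorem~\ref{firstres} ($I^2=\beta\OO_K$ with $\beta$ in a fixed box), which your Step~2 correctly anticipates for the second bound but which is already needed to prove the first one; without it the claimed injection into a single projective space is unjustified.
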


\begin{remark} 
{\em 
It may seem that the first part of our theorem follows immediately from the class number formula $$|\pic^0(C)(k)| = |k|^g\log|k|\res_{s=1}\zeta_C(s),$$ at least up to a sub-exponential factor in $g$. However, the point of this theorem is that we are not restricting the gonality of $C$, which means that $\res_{s=1}\zeta_C(s)$ can in fact grow (or shrink) exponentially in $g$. This is analogous to the fact that the usual Brauer--Siegel theorem over number fields becomes false if one drops the requirement that the discriminant must grow super-exponentially in the degree. In fact, the trivial bound is $(1+|k|^{0.5})^{2g}$. Thus, the first part of our theorem does indeed provide a nontrivial power saving.
}
\end{remark}

\begin{remark}
{\em 
For function fields, there is another way to get a ``trivial bound'', namely by noting that $\pic^0(C)(k)[2]\subset \pic^0(C)(\overline{k})[2]$ and the latter group has size $2^{2g}$, or $2^g$ if
the characteristic is~$2$. This (usually) does better than the other trivial bound, and in fact our result only beats this bound in the case where $k=\F_3$. In this case, 
to our knowledge this is the first such improvement, where the problem is usually as difficult as in the number field case.
}
\end{remark}

\begin{remark} {\em The result we obtain in the function field case is highly suggestive that in the number field case, it should be provable by the analogous means that the 2-torsion in the class group is bounded by the square root of the discriminant, regardless of the degree. However, as far as we can see, in trying to carry out the proof we get derailed by the complexities of high-dimensional geometry. In other words, we have to replace the straightforward use of Riemann--Roch with the difficulties of Minkowski bases in high dimensions. }
\end{remark}

\begin{proof}
We fix a degree $g$ divisor $E$ of $C$. Now, suppose that $D$ is a degree $0$ divisor of $C$ , such that $2D$ is principal. By the Riemann-Roch theorem, any divisor of degree $g$ or higher has a section, and thus, $h^0(C,E-D)>0$. Hence, at the cost of replacing $D$ by a linearly equivalent divisor, we may assume that $E-D$ is a positive divisor $D_+$, and thus we may write $D=E-D_+$. Now as 2D is principal, it follows that $h^0(C,2E-2D_+)=1$ or, in other words, $H^0(C,2E)$ has an
element~$f$, unique up to scaling,  such that $\div(f) = 2D_+-2E$. Let $$H^0(C,2E)^{\rm sq}:=\{f\in H^0(C,2E): f\neq 0 \mbox{ and } 2\mid\div(f)\}.$$ It follows that there is a surjection from $\P H^0(C,2E)^{\rm sq}$ onto 
$\pic^0(C)(k)[2]$. The Riemann--Roch theorem, combined with the fact that the first cohomology of any divisor vanishes if the degree of that divisor is larger than $2g-2$, now implies that $|H^0(C,2E)| = |k|^{g+1}$. This implies the first claim in our theorem.

For the second claim, we proceed just as in the proof of Theorem \ref{main}, but using instead the function-field analogue of the theorem of Bombieri and Pila~\cite[Theorem~5]{BP} established by Sedunova~\cite[Theorem~1]{Sedunova}.
\end{proof}

\subsection*{Acknowledgements}

This paper is the result of our collaboration at a SQuaRE (Structured Quartet Research Ensemble) held at the American Institute of Mathematics. We would all like to thank AIM for their financial support and for providing an excellent environment in which to work.

MB was partially supported by a Simons Investigator Grant and NSF Grant DMS-1001828.
TT  was partially supported by the JSPS and KAKENHI Grants JP24654005 and JP25707002.
FT  was partially supported by NSF Grant DMS-1201330 and by the National Security Agency under a Young Investigator Grant.

Finally, we would like to thank Bob Hough for helpful conversations.


\begin{thebibliography}{16}

\bibitem{Baily}
A.\ M.\ Baily, {On the density of discriminants of quartic fields},
{\it J.\ Reine Angew.\ Math.} \textbf{315} (1980), 190--210. 

\bibitem{BShnid}
M.\ Bhargava and A.\ Shnidman,
{On the number of cubic orders of bounded discriminant having automorphism group $C_3$, 
and related problems}, {\it Algebra Number Theory} \textbf{8} (2014), no.~1, 53--88. 

\bibitem{BP}
E.\ Bombieri and J.\ Pila,
{The number of integral points on arcs and ovals},
{\it Duke Math.\ J.} \textbf{59} (1989), no.~2, 337--357.

\bibitem{BK}
A.\ Brumer and K.\ Kramer, The rank of elliptic curves, {\it Duke Math.\ J.}
{\bf 44}  (1977), no.\ 4, 715--743.

\bibitem{CM}
H.\ Cohen and A.\ Morra,
{Counting cubic extensions with given quadratic resolvent}, 
{\it J.\ Algebra} \textbf{325} (2011), 461--478.

\bibitem{Cohn}
H.\ Cohn,
{The density of abelian cubic fields},
{\it Proc.\ Amer.\ Math.\ Soc.} \textbf{5} (1954), 476--477.

\bibitem{DH}
H.\ Davenport and H.\ Heilbronn, {On the density of discriminants of cubic fields II},
{\it Proc.\ Roy.\ Soc.\ London Ser.\ A} \textbf{322} (1971), no.~1551, 405--420.

\bibitem{EV} J.\ Ellenberg and A.\ Venkatesh, Reflection principles and bounds for class group torsion,
{\it Int.\ Math.\ Res.\ Not.} IMRN 2007, no.~1. 


\bibitem{HV}
H.\ A.\ Helfgott and  A.\ Venkatesh, Integral points on elliptic curves and 3-torsion in class groups, {\it J.\ Amer.\ Math.\ Soc.} {\bf 19} (2006), no.~3, 527--550

\bibitem{Hcf}
H.\ Heilbronn, On the $2$-classgroup of cubic fields, {\it Studies in Pure Math.} (1971), 117--119.

\bibitem{Pierce}
L.\ Pierce, A bound for the $3$-part of class numbers of quadratic fields by means of the square sieve, {\it Forum Math.} {\bf 18} (2006), no. 4, 677--698.

\bibitem{Sedunova}
A.\ Sedunova, On the Bombieri--Pila Method Over Function Fields,
{\tt https://arxiv.org/abs/1506.08757v1} .

\bibitem{Siegel}
C.\ L.\ Siegel, {\it Lectures on the Geometry of Numbers},
Springer-Verlag, Berlin, 1989.

\bibitem{T}
J.\  Tsimerman, Brauer--Siegel for arithmetic tori and lower bounds for
Galois orbits of special points, {\it J.\ Amer.\ Math.\ Soc.}
{\bf 25} (2012), 1091--1117.

\bibitem{Wong0}
S.\ Wong, {Automorphic forms on $\GL(2)$ and the rank of class groups},
{\it J.\ Reine Angew.\ Math.} \textbf{515} (1999), 125--153.

\bibitem{Wong}
S.\ Wong, {Densities of quartic fields with even Galois groups}, 
{\it Proc.\ Amer.\ Math.\ Soc.} \textbf{133} (2005), no.~10, 2873--2881.

\end{thebibliography}
\end{document}